\def\l@subsection{\@tocline{1}{0pt}{2pc}{1pc}{}}
\def\l@subsubsection{\@tocline{2}{0pt}{2pc}{1pc}{}}
\DeclareMathAlphabet{\pazocal}{OMS}{zplm}{m}{n}
\tikzset{>=stealth}
  \newcommand{\EE}{\mathbb{E}}
  \newcommand{\ZZ}{\mathbb{Z}}
  \newcommand{\gothic}{\mathfrak}
  \newcommand{\go}{{\gothic o}}
  \newtheorem{theorem}{Theorem}[section]
  \newtheorem{proposition}[theorem]{Proposition}
  \newtheorem{corollary}[theorem]{Corollary}
  \newtheorem{lemma}[theorem]{Lemma}
  \newtheorem{claim}[theorem]{Claim}
  \newtheorem*{claim*}{Claim}
  \newtheorem{introthm}{Theorem}
  \theoremstyle{definition}
  \newtheorem{definition}[theorem]{Definition}
  \newtheorem{example}[theorem]{Example}
  \newtheorem*{question*}{Question}
  \newtheorem*{answer*}{Answer}
  \newtheorem*{application*}{Application}
  \theoremstyle{remark}
  \newtheorem*{remark*}{Remark}
  \newcommand{\Teich}{{Teichm\"uller }} 
  \newcommand{\sQ}{{\sf Q}}
  \newcommand{\qq}{{\sf q}}
\DeclareMathOperator{\Cay}{Cay}
  \newcommand{\param}{{\mathchoice{\mkern1mu\mbox{\raise2.2pt\hbox{$
  \centerdot$}}
  \mkern1mu}{\mkern1mu\mbox{\raise2.2pt\hbox{$\centerdot$}}\mkern1mu}{
  \mkern1.5mu\centerdot\mkern1.5mu}{\mkern1.5mu\centerdot\mkern1.5mu}}}
\DeclarePairedDelimiterX{\norm}[1]{\lvert}{\rvert}{#1}
\DeclarePairedDelimiterX{\Norm}[1]{\lVert}{\rVert}{#1}
  \renewcommand{\setminus}{{\smallsetminus}}
  \newcommand{\from}{\colon\thinspace}
\newcommand{\CAT}{\ensuremath{\operatorname{CAT}(0)}\xspace}
\renewcommand{\Cay}{\text{Cay}}
\title[First-passage percolation, non-positive curvature, and radial maps]{First-passage percolation, non-positive curvature, and radial maps}
\author{Dominic Bair}
 \address{Department of Mathematics,  University of Tennessee at Knoxville, Knoxville, TN, USA}
\email{dbair@vols.utk.edu}
\author{Sagnik Jana}
 \address{Department of Mathematics,  University of Tennessee at Knoxville, Knoxville, TN, USA}
\email{sjana1@vols.utk.edu}
\author{Yulan Qing}
 \address{Department of Mathematics,  University of Tennessee at Knoxville, Knoxville, TN, USA}
 \email{yqing@utk.edu}
\begin{document}
\begin{abstract}
Given an infinite connected graph $G$, a way to randomly perturb its metric is to assign random i.i.d. lengths to the edges of the graph, a process called first-passage percolation. Assume that the graph is infinite and of bounded degree. Assume the edge length distribution, $\nu$, has a finite expectation and is supported on $[0, \infty)$. We prove in this paper that non-positive curvature almost surely is not preserved by the associated percolation. In particular, Gromov hyperbolicity and coarse \CAT property of graphs are almost surely not preserved. We also show that if a graph contains a Morse geodesic ray, then the resulting image of the ray under first-passage percolation is no longer Morse. Lastly, we show that first-passage percolation almost surely is a radial map on $G$. 
\end{abstract}
\maketitle
\needspace{5\baselineskip}

\section{Introduction}

First-passage percolation (FPP) provides a probabilistic framework for studying random perturbations of a given geometric structure. In the classical model, one assigns independent and identically distributed (i.i.d.) random edge lengths to a fixed underlying graph. For more background, we refer to \cite{HW65, ADH15}.

To recall the setup, let $X$ be a connected, undirected graph with vertex set $V$ and edge set $E$. For each function $\omega : E \to (0, \infty)$, we define a weighted path metric $d_\omega$ on $V$ by assigning to each edge $e \in E$ the weight $\omega(e)$. For vertices $v_1, v_2 \in V$, the distance $d_\omega(v_1, v_2)$ is the infimum over all paths $\gamma = (e_1, \ldots, e_m)$ joining $v_1$ to $v_2$ of the length  
\[
|\gamma|_\omega = \sum_{i=1}^m \omega(e_i).
\]
When $\omega(e) \equiv 1$ for all $e \in E$, this reduces to the standard graph metric $d$.  
We equip the space of all weight assignments $\Omega = [0, \infty)^E$ with the product probability measure $P$, where the coordinate distributions are independent samples from a fixed measure $\nu$ supported on $[0, \infty)$.  

A long-standing open problem in first-passage percolation asks whether the model on $\mathbb{Z}^2$ admits, with positive probability, a bi-infinite geodesic. Kesten (Saint-Flour, 1984)(\cite{Kesten}) attributes this question to Furstenberg, and it remains unresolved despite partial progress by Licea and Newman~\cite{LN96}. Their work also notes that the conjecture—asserting the nonexistence of such geodesics—originated independently in the physics literature on spin glasses. Wehr and Woo~\cite{WW} later showed that in a half-plane model with continuous edge-length distributions of finite mean, no two-sided infinite geodesics can occur.

The situation contrasts sharply when we allow the space to have negative curvature.   Benjamini, Tessera, and Zeitouni~\cite{BZ12, BT17} established several key results and posed influential open questions. 
Notably,~\cite{BZ12} proved the tightness of passage-time fluctuations from the origin to the boundary of large balls, and~\cite{BT17} showed almost sure existence of bi-infinite geodesics in hyperbolic spaces. Further progress was made by Basu and Mahan~\cite{BM22}, who studied FPP on hyperbolic groups. Under appropriate assumptions on the edge-weight distribution, they demonstrated that the passage-time velocity exists generically with respect to the Patterson–Sullivan measure and is almost surely constant by ergodicity of the group action on $\partial G$. They also investigated coalescence phenomena and directional variance of passage times. However, one question remains open, are the bi-infinite geodesic lines in hyperbolic graphs after first-passage percolation still hyperbolic? In this paper we answer this question in the negative.

We fix some notation for paths in graphs. For a simple graph $X$, a path $\gamma = (e_1, \ldots, e_n)$ joining vertices $x$ and $y$ consists of consecutive edges connecting $\gamma(0) = x$ to $\gamma(n) = y$. The subpath $\gamma([i,j]) = (e_{i+1}, \ldots, e_j)$ joins $\gamma(i)$ to $\gamma(j)$, and infinite or bi-infinite paths are defined analogously.

\begin{introthm}\label{introthm1}
Let $X$ be a Gromov hyperbolic space, and an infinite, connected graph with bounded degree. Suppose for any bounded set $B$, at least one component of $X \setminus B$ is not a tree. Assume $\EE \omega_e < \infty$, $\text{Supp}(\nu) = (0,\infty)$, and $\nu(\{0\}) = 0$. Then, there exists a full measure subset \(\Omega_1\subset \Omega\) such that for all \(\omega\in \Omega_1\) $X_\omega$ is not Gromov hyperbolic.
\end{introthm}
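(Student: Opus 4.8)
The plan is to exploit the hypothesis that, after removing any bounded set, some complementary component contains a cycle. The idea is that first-passage percolation will, with probability one, produce arbitrarily "fat" quadrilaterals far out in the space, violating the thin-triangles (equivalently, thin-quadrilaterals) condition uniformly.

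First I would reduce to a local statement about the geometry near a single cycle. Fix a basepoint $o$. Since $X$ is not eventually a tree, there is a sequence of embedded cycles $C_k$ in $X$ with $d(o, C_k) \to \infty$, and by bounded degree these cycles have length bounded below (a cycle must have length at least $3$, but in fact we only need length $\geq 3$). Actually the key quantitative input is the opposite: bounded degree ensures that a ball of radius $R$ in $X$ contains at most exponentially many edges, so passage times along the roughly $\exp(O(R))$ relevant paths can be controlled by a union bound. For each cycle $C_k$, pick two antipodal-ish vertices $p_k, q_k$ on $C_k$; the cycle splits into two arcs $\alpha_k, \beta_k$ from $p_k$ to $q_k$, each of combinatorial length at least $1$ and together of length $|C_k|$.

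The heart of the argument is: for a cycle of fixed combinatorial length $\ell$, the event that \emph{both} arcs have $\omega$-length at least $t$ while the $\omega$-distance from $p_k$ to $q_k$ inside a fixed-radius neighborhood of $C_k$ is also at least $t$, has probability bounded below by some $c(\ell, t) > 0$ depending only on $\ell$ and $t$ (here we use $\mathrm{Supp}(\nu) = (0,\infty)$, so each edge independently has weight $\geq t'$ with positive probability, and $\nu(\{0\})=0$ is used to ensure no degenerate shortcuts — though in fact for the \emph{lower} bound on $d_\omega$ we want edges to be large, which is the positive-probability direction). Then I would pass to a subsequence of cycles $C_{k_j}$ whose fixed-radius neighborhoods $N_j$ are pairwise disjoint (possible since $d(o,C_k)\to\infty$ and the neighborhoods have uniformly bounded diameter by bounded degree). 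By independence of the $\omega$-values on disjoint edge sets and the second Borel–Cantelli lemma, almost surely infinitely many of these neighborhoods $N_j$ have the property that $C_{k_j}$ is a "fat bigon": both arcs $\omega$-long and $\omega$-far apart. The main obstacle, and the step requiring the most care, is showing that a fat bigon \emph{inside $X_\omega$}, i.e., taking into account that geodesics in $X_\omega$ might exit the neighborhood $N_j$ and return, still forces a failure of $\delta$-hyperbolicity for every fixed $\delta$; this is why one wants the full cycle arcs to be long and why a union bound over all paths of bounded length leaving and re-entering $N_j$ (finitely many, by bounded degree) is needed to guarantee no short $\omega$-detour exists outside $N_j$ either.

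Finally, to conclude: if $X_\omega$ were $\delta$-hyperbolic for some finite $\delta$, then geodesic bigons would be $2\delta$-thin, so any two vertices $p, q$ would have all geodesics between them within Hausdorff distance $2\delta$. But we have produced, infinitely often, pairs $p_{k_j}, q_{k_j}$ joined by two paths $\alpha_{k_j}, \beta_{k_j}$ that (after the union bound) are close to $\omega$-geodesics yet are forced to diverge by more than $2\delta$ at their midpoints — since their only potential meeting region is inside $N_j$, where by construction both arcs stay $\omega$-far from each other. Letting $j \to \infty$ contradicts a \emph{single} value of $\delta$, because we can arrange $t = t_j \to \infty$ in the fat-bigon events (the probabilities $c(\ell_{k_j}, t_j)$ may shrink, but as long as $\sum_j c(\ell_{k_j}, t_j) = \infty$ Borel–Cantelli still applies; and since we are free to choose the subsequence sparsely and $\ell_{k_j}$ may be taken constant along a further subsequence by pigeonhole, we can even keep $t_j$ growing while keeping the sum divergent). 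Hence on a full-measure set $\Omega_1$, no finite $\delta$ works, so $X_\omega$ is not Gromov hyperbolic. The one genuine subtlety to nail down is the interaction between the probabilistic construction and the quantifier "for all $\delta$": it is cleanest to first fix $\delta$, build a full-measure event $\Omega_1^{(\delta)}$ on which $\delta$-hyperbolicity fails, and then intersect over $\delta \in \NN$ to obtain $\Omega_1 = \bigcap_{\delta} \Omega_1^{(\delta)}$.
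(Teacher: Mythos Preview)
Your overall architecture matches the paper's: extract infinitely many pairwise disjoint cycles from the ``not eventually a tree'' hypothesis, define independent events on them, invoke the second Borel--Cantelli lemma to defeat each fixed $\delta$, and then intersect over $\delta\in\NN$. Where your proposal diverges from the paper---and where it has a genuine gap---is the local step: turning a single cycle into a concrete non-$\delta$-slim geodesic triangle in $X_\omega$.

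You attempt this by building a ``fat bigon'': force both arcs $\alpha_k,\beta_k$ of $C_k$ to have $\omega$-length $\geq t$ and argue they are ``close to $\omega$-geodesics'' that diverge at their midpoints. But making both arcs long does nothing to make either one an $\omega$-geodesic, or even a quasi-geodesic with controlled constants; the actual $\omega$-geodesic from $p_k$ to $q_k$ may wander far from $C_k$ and bear no relation to $\alpha_k,\beta_k$. Your proposed remedy---a union bound over paths ``of bounded length leaving and re-entering $N_j$ (finitely many, by bounded degree)''---does not work as stated: competing paths have unbounded \emph{combinatorial} length, so the union is infinite, and summability would require an input like Proposition~\ref{shrinkingprop} together with an exponential path-counting bound, none of which you invoke. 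Even granting $d_\omega(p_k,q_k)\geq t$, you have still not exhibited a geodesic triangle (or two genuine $\omega$-geodesics) witnessing thickness.

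The paper sidesteps all of this with a much cleaner local observation (Proposition~\ref{prop:nondslim}): in the metric realization $X_\omega$, if a cycle contains a single edge $e$ with $|e|_\omega\geq 4\delta$, then $X_\omega$ already contains a non-$\delta$-slim geodesic triangle. The point is that the midpoint $m$ of $e$ can reach the rest of $X_\omega$ only through the two endpoints of $e$, so $d_\omega(m,X_\omega\setminus e)\geq 2\delta$ automatically---no neighborhoods $N_j$, no global geodesic control, no union bounds. The Borel--Cantelli input is then trivial: $\mathbb{P}(|e|_\omega\geq 4\delta)=\epsilon>0$ for every edge by the support hypothesis, and one edge per disjoint cycle gives independent events. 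Your sketch could plausibly be repaired (for instance, by forcing the cycle edges to be moderate while making every edge in a fixed-radius collar around $C_k$ very long, thereby trapping geodesics between cycle vertices on $C_k$), but as written the assertion that $\alpha_k,\beta_k$ are ``close to $\omega$-geodesics'' is the missing idea.
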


Secondly, we show that even when the curvature is relaxed from coarsely negative to coarsely non-positive, this property is still not preserved by first-passage percolation.

\begin{introthm}\label{introthm2}
Let $X$ be an infinite, connected graph and suppose $X$ is quasi-isometric to a \CAT space. Suppose $X$ has bounded degree and is not a tree outside of any bounded set.  Assume $\EE \omega_e < \infty$ and $\nu(\left\{0\right\}) = 0$. Then, there exists a full measure subset $\Omega_2 \subset \Omega$ such that for all $\omega \in \Omega_2$, $X_\omega$ is not quasi-isometrically \CAT.

\end{introthm}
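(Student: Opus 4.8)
The strategy mirrors the proof of \thmref{introthm1}: one exhibits, arbitrarily far out in $X_\omega$ and at unbounded scales, explicit finite configurations that quantitatively contradict the relevant non-positive-curvature condition, and then one uses independence of the edge weights on disjoint regions together with a Borel--Cantelli argument to produce a full-measure set of weights for which these violations occur at \emph{every} scale simultaneously. Three ingredients are needed. First, a quasi-isometry-invariant ``quantitative CAT(0)'' condition — a coarse convexity/comparison statement of the form ``if $Z$ is $(\lambda,c)$-quasi-isometric to a \CAT space then geodesic triangles in $Z$, equivalently a bicombing of $Z$, are thinner than their Euclidean comparison models up to errors controlled by $(\lambda,c)$'' — to play the role that ``$\delta$-thin triangles'' plays for hyperbolicity. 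Second, the geometric configuration that FPP produces. Third, the probabilistic engine, which is essentially identical to that of \thmref{introthm1}.

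For the configuration: since $X$ is not a tree outside any bounded set, for every $R$ there is an edge $e$ lying on an embedded cycle with $d(x_0,e)\ge R$; deleting $e$ leaves a path $\pi_e$ of bounded combinatorial length joining the endpoints $u,v$ of $e$. Because $\EE\,\omega_e<\infty$ and $\nu$ charges every interval $(t,\infty)$, the event ``$\omega(e)$ is enormous while the edges of $\pi_e$ have small total weight'' has positive probability, uniformly over the infinitely many \emph{pairwise disjoint} such edges available far from $x_0$ (bounded degree guarantees disjoint ones persist at every distance). On this event $e$ becomes a metric arc of length $\ell=\omega(e)$ whose interior is unreachable except through $e$, while $d_\omega(u,v)$ is small; thus $e\cup\pi_e$ is a coarsely isometrically embedded \emph{unfilled} loop of girth $\approx\ell$ inside $X_\omega$ — ``unfilled'' precisely because there are no chords across $e$, which is what distinguishes it from the round circles (spanning flat discs) that genuine \CAT spaces contain. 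Since the relevant events are independent across the disjoint edges and each has at least a fixed positive probability, Borel--Cantelli gives, almost surely, such loops of unbounded girth arbitrarily far out, for every target girth at once.

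The main obstacle — and the point where the argument genuinely departs from the hyperbolic case — is to upgrade ``$X_\omega$ contains unfilled loops of unbounded girth'' to ``$X_\omega$ is not quasi-isometric to any \CAT space''. The difficulty is that the coarse comparison in the first ingredient involves an essentially quadratic inequality, so a single fixed-ratio violation at one scale is absorbed by the multiplicative distortion of a quasi-isometry with $\lambda>1$; indeed \CAT spaces themselves contain quasi-isometrically embedded circles of every girth. The resolution must exploit that the violations occur at \emph{unboundedly many scales}: one expects to rescale $X_\omega$ along the girths of the loops and pass to an asymptotic cone, which is then bi-Lipschitz to an asymptotic cone of the hypothetical \CAT target — hence bi-Lipschitz to a \CAT space — while visibly containing an \emph{isometrically} embedded circle, something one rules out via a midpoint/comparison argument (uniqueness of midpoints, or the CN inequality) in the \CAT model combined with the bi-Lipschitz control. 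Making this reduction rigorous — in particular arranging the aspect ratios of the faraway configurations and tracking the comparison estimates so that the final contradiction does not depend on the unknown quasi-isometry constants — is the technical heart; the probabilistic and combinatorial parts are as in \thmref{introthm1}.
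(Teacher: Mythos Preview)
The paper takes a far more direct route than you propose: it simply asserts that if $X_\omega$ were $(q,Q)$-quasi-isometric to a \CAT space then every cycle in $X_\omega$ would have length at most $2q+Q$, and then runs Borel--Cantelli exactly as in \thmref{introthm1} to produce, almost surely, edges on the infinitely many disjoint cycles with $\omega$-length exceeding $2q+Q$; intersecting over countably many $(q,Q)$ gives the full-measure set $\Omega_2$. There are no asymptotic cones and no rescaling; the probabilistic engine is identical to the hyperbolic case, and the geometric input is a single sentence.

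Your concern that a large loop cannot by itself obstruct a quasi-isometry to a \CAT space is legitimate --- $\ZZ^2$ is quasi-isometric to $\RR^2$ and contains cycles of every length --- and the paper's one-line geometric claim is stated without further argument, so the very obstacle you isolate is glossed over there. Your asymptotic-cone plan is a sound strategy for making this rigorous, but the final step as you describe it has a gap: bi-Lipschitz maps preserve neither midpoints nor the CN inequality, so an isometrically embedded circle in a space merely bi-Lipschitz to a \CAT space cannot be excluded by comparison estimates alone. The argument that actually closes is topological rather than metric: because the interior of the long edge $e$ is accessible only through its two endpoints, and those endpoints collapse to a single point in the cone, the limiting circle is wedged on at one point and is therefore homotopically nontrivial. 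Asymptotic cones of quasi-isometric spaces are bi-Lipschitz \emph{homeomorphic}, and asymptotic cones of \CAT spaces are \CAT and hence contractible, so a non-simply-connected cone yields the contradiction you want.
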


Lastly, we look more generally at Morse boundaries of proper geodesic graphs. We show that Morse rays do not map to Morse sets under first-passage percolation.

\begin{introthm}\label{introthm5}
Let $X$ be an infinite, connected graph with bounded degree and suppose $\alpha$ is a Morse geodesic ray such that $\alpha$ has infinitely many disjoint paths joining two vertices on $\alpha$. Assume $\EE \omega_e < \infty$ and $\nu(0) = 0$. Then for almost every $\omega$, $\omega(\alpha)$ admits no Morse gauge.
\end{introthm}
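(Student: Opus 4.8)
The plan is to turn the infinitely many pairwise disjoint detours along $\alpha$ into a concrete obstruction to Morse-ness of $\omega(\alpha)$: for almost every $\omega$ I want to produce a sequence of quasi-geodesics of $X_\omega$, with \emph{uniformly bounded} quasi-geodesic constants, having endpoints on $\omega(\alpha)$ but escaping every bounded $d_\omega$-neighbourhood of $\omega(\alpha)$. Since a Morse gauge $M$ would pin each such quasi-geodesic inside the fixed radius $M(K_0,C_0)$ of $\omega(\alpha)$, this is a contradiction; and if $\omega(\alpha)$ is not even a $d_\omega$-quasi-geodesic, then it has no Morse gauge by definition, so that case is free.

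To set up, from the hypothesis I would extract pairwise disjoint arcs $\sigma_i$ joining $a_i=\alpha(s_i)$ to $b_i=\alpha(t_i)$ on $\alpha$, and, passing to a subsequence, arrange that the index intervals $[s_i,t_i]$ are pairwise disjoint with $s_i\to\infty$. Since $\alpha$ is a $d$-geodesic, $n_i:=t_i-s_i=d(a_i,b_i)\le\ell_i:=|\sigma_i|$. The cycles $C_i=\sigma_i\cup\alpha[a_i,b_i]$ are pairwise disjoint subgraphs, so the weight families $(\omega_e)_{e\in C_i}$ are mutually independent, and this independence is the lever for a Borel--Cantelli argument: if a ``good'' event $G_i$ attached to $C_i$ has $P(G_i)$ bounded below uniformly in $i$, then $G_i$ occurs for infinitely many $i$ almost surely. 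The event $G_i$ I have in mind records that, in the metric $d_\omega$: (i) the arc $\sigma_i$ (or a $d_\omega$-geodesic from $a_i$ to $b_i$ avoiding the interior of $\alpha$) is a $(K_0,C_0)$-quasi-geodesic for absolute constants $K_0,C_0$; and (ii) its $\omega$-length is of order $n_i\,\EE\omega_e$, comparable to $|\alpha[a_i,b_i]|_\omega$. Property (ii) is a law-of-large-numbers statement for the i.i.d.\ weights along $C_i$; property (i) needs, in addition, a uniform positive ``time-constant'' lower bound $d_\omega(x,y)\ge c\,d(x,y)-c'$ along the relevant subsegments, which I would obtain from a Cram\'er-type estimate (using $\nu(\{0\})=0$, so that the large-deviation rate for the empirical weight mean tending to $0$ beats the bounded-degree exponential path count) together with a Borel--Cantelli union bound over the countable vertex set $\alpha(\NN)$ and its bounded neighbourhoods.

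With $G_i$ holding for infinitely many $i$, the geometric step is to assemble the escaping quasi-geodesics. Using the quasi-geodesics from (i), and concatenating $\alpha$-segments with finitely many of the detours, one builds a path $\beta_N$ that is a $(K_1,C_1)$-quasi-geodesic of $X_\omega$ with $K_1,C_1$ independent of $N$ (the extra $\omega$-length injected by the detours being controlled by (ii)), with endpoints on $\omega(\alpha)$, but whose maximal $d_\omega$-distance from $\omega(\alpha)$ grows without bound in $N$. Here the hypothesis that $\alpha$ is Morse \emph{in the unperturbed graph $X$} is what one must exploit to see that the detours genuinely pull away from $\omega(\alpha)$ after percolation rather than collapsing into a bounded neighbourhood of a single $\alpha$-segment. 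Intersecting the almost-sure event ``$G_i$ infinitely often'' with the almost-sure event ``$\omega(\alpha)$ is not a quasi-geodesic, or is one and the paths $\beta_N$ defeat every candidate gauge'' then yields: for almost every $\omega$, $\omega(\alpha)$ admits no Morse gauge.

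The step I expect to be the real obstacle is precisely the simultaneous demand inside $G_i$ and in the assembly: the detours must be quasi-geodesics of $X_\omega$ with constants independent of $i$ --- forcing an upper bound $|\sigma_i[u,v]|_\omega\le K_0\,d_\omega(\sigma_i(u),\sigma_i(v))+C_0$, hence a uniform lower bound on $d_\omega$ of large-deviation type --- while \emph{the same detours must escape $\omega(\alpha)$}, and these two requirements push the length comparison in opposite directions. Making this work hinges on choosing the detours and the subsequence correctly, and on extracting from the Morse property of $\alpha$ in $X$ exactly the quantitative ``spreading'' of the detours transverse to $\alpha$ that survives the i.i.d.\ perturbation; the remainder --- the Borel--Cantelli bookkeeping, the concentration estimates, and the unwinding of the definition of a Morse gauge --- is routine.
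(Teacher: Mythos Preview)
Your approach diverges substantially from the paper's and contains a genuine gap at the escaping step.

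The paper does not attempt to build $(K_0,C_0)$--quasi-geodesics out of the detours, nor does it concatenate anything. Instead, for each $j$ it considers the actual $\omega$--geodesic $\alpha_\omega^j$ joining $x_j,y_j\in\alpha$; this is a $(1,0)$--quasi-geodesic in $X_\omega$ automatically, so the only remaining task is to show that for any prospective gauge value $D=M(1,0)$, infinitely many of the $\alpha_\omega^j$ leave the $D$--neighbourhood of $\omega(\alpha)$. A preparatory proposition (using the full-support hypothesis $\mathrm{Supp}(\nu)=(0,\infty)$ together with Borel--Cantelli on the disjoint cycles) first arranges that the $\alpha$--segment from $x_j$ to $y_j$ has $\omega$--length at least that of the detour $\gamma_j$ infinitely often, forcing $\alpha_\omega^j$ to contain a vertex $p_j\notin\alpha$; the escape from the $D$--neighbourhood is then obtained by a second Borel--Cantelli argument on the event that an edge on the path from $p_j$ back to $\alpha$ has weight at least $3D$, again exploiting unbounded support. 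No LLN estimate, no uniform time-constant lower bound, and no Morse hypothesis on $\alpha$ in $(X,d)$ enter the argument.

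The gap in your outline is precisely the step you flag as the obstacle. Each detour $\sigma_i$ meets $\alpha$ at both endpoints, so every point on $\sigma_i$ lies within $|\sigma_i|_\omega/2$ of $\omega(\alpha)$; concatenating such detours along $\alpha$--segments produces a path $\beta_N$ that returns to $\omega(\alpha)$ at every junction and hence never escapes further than any single detour does. To make even one detour $d_\omega$--far from $\omega(\alpha)$ you would need \emph{every} path from some interior vertex of $\sigma_i$ back to $\alpha$ to be $d_\omega$--long --- a condition on edges lying \emph{outside} the cycle $C_i$, not an LLN or large-deviations estimate on the weights inside $C_i$. Your invocation of the Morse property of $\alpha$ in $(X,d)$ points in the wrong direction: that hypothesis says quasi-geodesics with endpoints on $\alpha$ stay \emph{close} to $\alpha$ in $d$, so it supplies no transverse spreading and cannot be converted into $d_\omega$--farness. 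Finally, the opening dichotomy is also incorrect: the Morse property in the paper is defined for arbitrary subsets, so $\omega(\alpha)$ failing to be a $d_\omega$--quasi-geodesic does not by itself preclude a Morse gauge.
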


It is thus interesting to ask if there is a boundary at infinity that the first-passage percolation is likely to preserve. In \cite{JQ25a} the authors relaxed the condition offered by \cite{BT17} to a more relaxed condition which is the existence of a  bi-infinite sublinearly Morse lines. Sublinearly Morse boundary is a large set of directions whose Morse property relaxes as a sublinear function of the radius as a geodesic gravel towards infinity. It is constructed to produce a metrizable, quasi-isometrically invariant boundary that is often also a topological model for suitable random walks on the associated group  \cite{QRT1, QRT2, GQR22}.

Lastly, we identify a geometric property of first-passage percolation. A map between metric spaces is \emph{radial} if it maps geodesic segments that lie on geodesic rays emanating from a fixed basepoint, $\go$, to quasi-isometric segments. That is to say, it behaves like a quasi-isometry only along radial directions. We show that 
\begin{introthm}[Corollary~\ref{cor:radial}]
 
 Let $\go$ be a vertex of $X$. Then there exists a $c>0$ such that for $a.e.~\omega$, there exists $r_2 =r_2(\omega)$ such that FPP is a $(c,r_2)$-radial map with respect to $\go$.

\end{introthm}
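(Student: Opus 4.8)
The plan is to derive the corollary from a single uniform estimate: that the first‑passage distance from the basepoint grows at least linearly in the graph distance, up to an almost surely finite additive constant. Write $D$ for a bound on the degree of $X$, let $d$ be the graph metric and $d_\omega$ the first‑passage metric, and recall that in the infimum defining $d_\omega$ one may restrict to self‑avoiding paths, and (since $\nu(\{0\})=0$) that this infimum is almost surely realised by a \emph{finite} such path. Being a $(c,r_2)$‑radial map with respect to $\go$ amounts to: for every vertex $v$, $\tfrac1c d(\go,v)-r_2\le d_\omega(\go,v)\le c\, d(\go,v)+r_2$, i.e.\ the identity sends each geodesic ray from $\go$ to a $(c,r_2)$‑quasi‑isometrically embedded arc based at $\go$. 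The upper bound is the easy direction: fixing a geodesic $[\go,v]$ in $X$ gives $d_\omega(\go,v)\le |[\go,v]|_\omega$, which along any fixed radial ray is asymptotically linear by the strong law of large numbers applied to the i.i.d.\ weights on that ray. So the substantive point, and the focus of the plan, is the lower bound: I want $c_0=c_0(\nu,D)>0$ and an a.s.-finite $r_2$ with $d_\omega(\go,v)\ge c_0\, d(\go,v)-r_2$ for all $v$, and then $c:=\max\{1/c_0,\ \EE\omega_e+1\}$ finishes.

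The first step is a Chernoff bound exploiting $\nu(\{0\})=0$. Since $e^{-\theta\omega_e}\le1$ and decreases to $0$ almost surely as $\theta\to\infty$ (using $P(\omega_e>0)=1$), dominated convergence gives $\EE[e^{-\theta\omega_e}]\downarrow\nu(\{0\})=0$; I fix $\theta$ with $\EE[e^{-\theta\omega_e}]<\tfrac1{2D}$ and then $c_0>0$ with $e^{\theta c_0}<2$, so that $\rho:=e^{\theta c_0}\EE[e^{-\theta\omega_e}]<1/D$. For a self‑avoiding path $\pi$ with $k$ edges and any $r\ge0$, the exponential Markov inequality yields $P\bigl(|\pi|_\omega<c_0k-r\bigr)\le e^{\theta(c_0k-r)}\EE[e^{-\theta\omega_e}]^k=e^{-\theta r}\rho^k$.

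The second step is a union bound anchored at $\go$ together with Borel--Cantelli. There are at most $D^k$ self‑avoiding paths from $\go$ with exactly $k$ edges. If $d_\omega(\go,v)<c_0\, d(\go,v)-r$, then the finite self‑avoiding path realising $d_\omega(\go,v)$ issues from $\go$, has some number $k\ge d(\go,v)$ of edges, and has $\omega$‑length $d_\omega(\go,v)<c_0\, d(\go,v)-r\le c_0k-r$; hence
\[
P\bigl(\exists\, v:\ d_\omega(\go,v)<c_0\, d(\go,v)-r\bigr)\ \le\ \sum_{k\ge1} D^k e^{-\theta r}\rho^k\ =\ \frac{e^{-\theta r}\,D\rho}{1-D\rho}\ \xrightarrow[\ r\to\infty\ ]{}\ 0.
\]
Therefore $P\bigl(\bigcup_{m\in\NN}\{\forall v:\ d_\omega(\go,v)\ge c_0\, d(\go,v)-m\}\bigr)=1$, which produces the a.s.-finite $r_2$, and together with the upper bound above this shows that FPP is a $(c,r_2)$‑radial map.

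I expect the main obstacle to be exactly the step where the exponential gain $\rho^k$ must defeat the $D^k$ count of length‑$k$ paths out of $\go$: this is the crucial place where $\nu(\{0\})=0$ is genuinely used — it is what lets one push $\EE[e^{-\theta\omega_e}]$ below $1/(2D)$ — and the order in which the limits in $\theta$ and then $c_0$ are taken matters. It is also worth emphasising why the statement is truly \emph{radial}: the argument works because every competing path emanates from the one fixed vertex $\go$, so there is no additional entropy from a choice of basepoint. The version with two free endpoints fails in general (already when $X$ is a tree, where $d_\omega$ really does distort the coarse geometry), which is precisely the phenomenon behind the earlier theorems of the paper.
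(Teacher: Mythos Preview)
There is a genuine gap: your reformulation of ``$(c,r_2)$-radial'' as the two-sided bound on $d_\omega(\go,v)$ is strictly weaker than the paper's definition, which asks for $c\,d(x,y)-r_2\le d_\omega(x,y)$ for \emph{every} pair $x,y$ lying on a common geodesic from $\go$ (together with a global large-scale Lipschitz bound). The clause you write after ``i.e.''---that each radial geodesic is sent to a quasi-isometrically embedded arc---is the correct reading, and it is not implied by control of $d_\omega(\go,\cdot)$ alone: when $x$ and $y$ are both far out along the same ray and neither equals $\go$, a short competing $\omega$-path between them need not pass anywhere near $\go$, so your enumeration of self-avoiding paths issuing from $\go$ does not capture it.

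Your Chernoff-plus-union-bound argument is correct as far as it goes, and it recovers Proposition~\ref{prop:lower_bound} (the uniform estimate $d_\omega(\go,v)\ge c_0\,d(\go,v)-r_2$). The paper's route to the full radial lower bound is Lemma~\ref{lem:radial}: one first bounds the number of geodesics of length $n$ out of $\go$ by $q^n$ and the number of vertex pairs on each by ${n\choose 2}$, then applies the single-pair tail estimate of Lemma~\ref{lem:overthought} (which is essentially your Chernoff step, summed over all competing paths between the given pair) to every such pair, and finishes with Borel--Cantelli in $n$. A smaller issue: the SLLN along \emph{one fixed} ray does not give the LSL upper bound uniformly over all rays, let alone over arbitrary pairs; the paper handles this in Lemma~\ref{lem:lsl} via Proposition~\ref{prop:upper_bnd}.
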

We anticipate this result will help us understand how first-passage percolation changes the metric property on sublinearly Morse boundaries.

\subsection*{Open questions}
 Theorem A and Theorem C show that first-passage percolation does not preserve all phenomenon born from negative curvature, and Theorem B shows that it does not preserve curvatures bounded from above. We ask if it is more likely that first-passage percolation preserves a phenomenon born from positive curvature or curvatures bounded from above. Specifically, we ask the following. Let $X$ be an infinite graph with uniformly bounded degree:
\begin{itemize}
\item Let $X$ be quasi-isometric to an Alexandrov space, is  $X_\omega$ almost surely an Alexandrov space? 
\item Let $X$ be quasi-isometric to an injective space, is  $X_\omega$ almost surely an injective space?

\end{itemize}
These questions are not assuming the existence of bi-infinite geodesic line, which is also open for these two spaces. Lastly given the result of \cite{JQ25a}, it makes sense to ask:
\begin{itemize}
\item Is first-passage percolation likely to preserve sublinearly Morse boundaries?
\end{itemize}
We anticipate a positive answer in the upcoming work of \cite{JQ25b}. If the answer is positive, then given the positive result on radial map, and in 2015 it is shown \cite{DV15} that radial maps induces H\"{o}lder continuous map on Gromov boundaries, it is then natural to ask
\begin{itemize}
\item Does first-passage percolation almost surely induce a H\"older map on the sublinearly Morse boundary?
\end{itemize}

\subsection*{Acknowledgement} The third-named author would like to thank Gabriel Pallier and Romain Tessera for helpful discussions.

\needspace{5\baselineskip}
\section{preliminaries}

\subsection{First-passage percolation} \label{sec23} \mbox{}\\
 $(X,d)$ be a connected graph with vertex set $V$ and edge set $E$. 
 with the graph distance metric $d$. Fix a probability measure $\nu$ supported on  $[0, \infty)$ and consider the product probability space
\[
\Omega = [0, \infty)^E, \quad \mathbb{P} = \nu^{\otimes E}.
\]
A typical element of $(\Omega, \mathbb{P})$ will be denoted by $\omega = \{\omega(e)\}_{e\in E}$. We define i.i.d random variables $X_e:\Omega \rightarrow [0, \infty)$ as $ X_e(\omega) = \omega(e) $ with law $\nu$. 
Assigning  $\omega(e)$ as the length of edge $e$ defines a random metric (apriori, a pseudometric) on $X$, known as the first-passage percolation (FPP) metric. The metric is defined as follows:

\begin{definition}

Let $\gamma = \{e_1, \dots, e_k\}$ be an edge path in $(X,d)$. For $\omega \in \Omega$, the $\omega$--length of $\gamma$ is defined as, 
\[
\bigr|\gamma\bigl|_{\omega}  = \sum_{e \in \gamma} \omega(e)
\]
The \emph{time coordinate of $\gamma$ under $\omega$} is defined as the random variable $T(\gamma,\omega)$ on $(\Omega,\mathbb P)$ and 
\[
T(\gamma,\omega) = \big|\gamma\big|_\omega.
\]

The \emph{FPP distance} is defined by, 
\[
d_\omega(x, y) = \inf_\gamma \bigl|\gamma\bigr|_\omega
\]
where the infimum is over all paths $\gamma$ with terminal vertices $x$ and $y$. A path realizing $d_\omega(x, y)$ is called a \emph{\( \omega \)-geodesic}. The \emph{first-passage time} between $x$ and $y$ is defined as the random variable $T(x,y)$ on $(\Omega,\mathbb P)$ and 
    \[
    T(x,y)(\omega) = d_\omega(x,y)
    \]
Further, if $\Gamma$ is a set of paths on $(X,d)$ with positive measure, the \emph{first-passage time of $\Gamma$ under $\omega$} is defined as
    \[
    T_\Gamma(\omega) = \inf_{\gamma\in\Gamma} T(\gamma,\omega). 
    \]
\end{definition}
    
\subsection{Geometric implication of first-passage percolation}

We end this preliminary section with a brief summary of known results on first-passage percolation required for the proofs that follow.

\begin{definition}
    [\cite{HW65}, 2.2] Two sets of paths $\Gamma_1$ and $\Gamma_2$ are said to be \emph{equivalent under lateral shift} is there exists a one-to-one mapping $f$ of $\Gamma_1$ onto $\Gamma_2$ such that for every subset of paths $\gamma_1,\gamma_2,\dots,\gamma_k$ belonging to $\Gamma_1$, the joint distribution $T(\gamma_1,\omega),T(\gamma_2,\omega),\dots ,T(\gamma_k,\omega)$ is identical with the joint distribution of $T(f(\gamma_1),\omega),T(f(\gamma_2),\omega),\dots ,T(f(\gamma_k),\omega)$.
\end{definition}

\begin{theorem}
    [\cite{HW65}, Theorem 2.2.1] \label{theorem:lateral_shift} If $\Gamma_1$ and $\Gamma_2$ are sets of paths equivalent under lateral shift, then $T_{\Gamma_1}(\omega)$ and $T_{\Gamma_2}(\omega)$ are identically distributed random variables.
\end{theorem}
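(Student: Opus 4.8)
The plan is to show that $T_{\Gamma_1}$ and $T_{\Gamma_2}$ have the same survival function, i.e. that $\mathbb{P}(T_{\Gamma_1} \geq t) = \mathbb{P}(T_{\Gamma_2} \geq t)$ for every $t \in \RR$; since the map $t \mapsto \mathbb{P}(T \geq t)$ determines the law of a real random variable uniquely, this suffices. The starting point is the elementary identity for an infimum: because $T_{\Gamma_1}(\omega) = \inf_{\gamma \in \Gamma_1} T(\gamma,\omega)$, we have $T_{\Gamma_1}(\omega) \geq t$ if and only if $T(\gamma,\omega) \geq t$ for every $\gamma \in \Gamma_1$, so that
\[
\{\, T_{\Gamma_1} \geq t \,\} = \bigcap_{\gamma \in \Gamma_1} \{\, T(\gamma,\param) \geq t \,\}.
\]
I would stress that one must use the closed inequality $\geq$ rather than the strict $>$, since an infimum can equal $t$ while every individual term strictly exceeds $t$; with the closed inequality the identity is exact.

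Next I would reduce the full intersection to finite ones. For the countable graphs under consideration the set of all finite edge paths is countable, hence $\Gamma_1$ is countable; fixing an enumeration $\Gamma_1 = \{\gamma_1, \gamma_2, \dots\}$, the infimum $T_{\Gamma_1}$ is a countable infimum of measurable functions and is therefore measurable, and continuity of $\mathbb{P}$ from above gives
\[
\mathbb{P}(T_{\Gamma_1} \geq t) = \mathbb{P}\Big( \bigcap_{i \geq 1} \{\, T(\gamma_i,\param) \geq t \,\} \Big) = \lim_{n \to \infty} \mathbb{P}\Big( \bigcap_{i=1}^{n} \{\, T(\gamma_i,\param) \geq t \,\} \Big).
\]
The crucial feature is that each term on the right is a functional of the joint law of the finite random vector $(T(\gamma_1,\param), \dots, T(\gamma_n,\param))$ alone.

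Then I would invoke the lateral-shift hypothesis. Let $f \colon \Gamma_1 \to \Gamma_2$ be the bijection witnessing the equivalence. By definition, for every $n$ the vectors $(T(\gamma_1,\param),\dots,T(\gamma_n,\param))$ and $(T(f(\gamma_1),\param),\dots,T(f(\gamma_n),\param))$ have the same joint distribution, so in particular
\[
\mathbb{P}\Big( \bigcap_{i=1}^{n} \{\, T(\gamma_i,\param) \geq t \,\} \Big) = \mathbb{P}\Big( \bigcap_{i=1}^{n} \{\, T(f(\gamma_i),\param) \geq t \,\} \Big).
\]
Since $f$ is a bijection, $\{f(\gamma_i)\}_{i \geq 1}$ enumerates $\Gamma_2$, so applying the same continuity-from-above computation to $\Gamma_2$ identifies the limit of the right-hand side with $\mathbb{P}(T_{\Gamma_2} \geq t)$. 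Passing to the limit $n \to \infty$ then yields $\mathbb{P}(T_{\Gamma_1} \geq t) = \mathbb{P}(T_{\Gamma_2} \geq t)$ for all $t$, which is the assertion.

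The proof is essentially measure-theoretic bookkeeping, and the step I would guard most carefully is the passage from the countable intersection to finite intersections. This is where the two ingredients that make the argument work are used: the countability of $\Gamma_1$ (which I would derive from the countability of the edge set $E$, guaranteeing also that $T_{\Gamma_1}$ is a well-defined measurable random variable), and the exactness of the infimum identity, which forces the use of the closed events $\{T(\gamma,\param) \geq t\}$. Once the finite-dimensional distributions are matched by lateral-shift equivalence, everything else follows formally, so I do not anticipate any serious obstacle beyond keeping these two points precise.
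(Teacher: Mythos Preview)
Your argument is correct. The paper does not actually supply a proof of this theorem: it is quoted as Theorem~2.2.1 of Hammersley--Welsh \cite{HW65} and used as a black box, so there is no in-paper proof to compare against. Your route---writing $\{T_{\Gamma_1}\ge t\}$ as the intersection $\bigcap_{\gamma\in\Gamma_1}\{T(\gamma,\param)\ge t\}$, using countability of the path set to invoke continuity from above, and then matching the finite intersections via the lateral-shift hypothesis---is the standard and essentially the only natural argument, and it is the one Hammersley and Welsh give (in somewhat terser form). The two points you flag, namely the need for the closed inequality so that the infimum identity is exact, and the countability of $\Gamma_1$ ensuring measurability and allowing the passage to finite intersections, are exactly the places one must be careful, and you handle both correctly.
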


\begin{example}
    (See Section \ref{sec:cay} for more background.) Let $G$ be an infinite finitely generated group with finite symmetric generating set $S$. Let $X = \Cay(G,S)$. Let $\Gamma_1$ be the set of all paths connecting $e$ to $g\in G$. Let $h\in G$. Then let $\Gamma_2$ be the set of all paths connecting $h$ to $hg$. Since $G$ acts geometrically on $X$ by left action, we have $h\Gamma_1 = \Gamma_2$. Hence, $\Gamma_1$ and $\Gamma_2$ are equivalent under lateral shift.
\end{example}
 
\begin{proposition}[\cite{BT17}, Lemma 2.3]\label{prop:upper_bnd}\mbox{}\\ Let $X$ be a connected graph, and let $\gamma$ be a self avoiding path. Assume $0<b=\mathbb{E} \omega_{e}<\infty$. Then for a.e. $\omega$, there exists $r_{0}=r_{0}(\omega)$ such that for all $i \leq 0 \leq j$,

\[
|\gamma([i, j])|_{\omega} \leq 2 b(j-i)+r_{0}
\]
\end{proposition}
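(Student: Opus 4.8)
The plan is to reduce the statement, via a trivial rearrangement, to two applications of the strong law of large numbers. The one structural fact I would use about $\gamma$ is that, being self-avoiding, it traverses each edge at most once; hence the sequence of edge weights $\{\omega(e_k)\}$ along $\gamma$ forms an i.i.d.\ family with law $\nu$ and mean $b = \EE\,\omega_e \in (0,\infty)$. Introduce signed partial sums $S_0 = 0$, $S_j = \sum_{k=1}^{j}\omega(e_k)$ for $j \ge 1$, and $S_j = -\sum_{k=j+1}^{0}\omega(e_k)$ for $j \le -1$, so that $|\gamma([i,j])|_\omega = S_j - S_i$ for all $i \le 0 \le j$. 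Adding and subtracting $2bj$ and $2bi$, the target inequality $S_j - S_i \le 2b(j-i) + r_0$ becomes
\[
\bigl(S_j - 2bj\bigr) + \bigl(2bi - S_i\bigr) \le r_0 .
\]
So it suffices to show that, almost surely, both $\sup_{j \ge 0}(S_j - 2bj)$ and $\sup_{i \le 0}(2bi - S_i)$ are finite; one then takes $r_0(\omega)$ to be their sum, which works uniformly over all $i \le 0 \le j$ at once.

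For the first supremum I would write $S_j - 2bj = \sum_{k=1}^{j}(\omega(e_k) - 2b)$, a sum of i.i.d.\ terms of mean $b - 2b = -b < 0$. By the strong law, $j^{-1}(S_j - 2bj) \to -b$ almost surely, so $S_j - 2bj \to -\infty$, and a sequence tending to $-\infty$ is bounded above; hence the supremum is a.s.\ finite. The second supremum is treated identically on the backward tail: with $n = -i$ one has $2bi - S_i = \sum_{k=-n+1}^{0}(\omega(e_k) - 2b)$, again $n$ i.i.d.\ mean-$(-b)$ terms, which $\to -\infty$ as $n \to \infty$, so it too is a.s.\ bounded above. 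Intersecting the two full-measure events gives the proposition.

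There is no serious obstacle here; the two points I would take care with are both minor. First, the constant $2b$ genuinely matters: replacing it by $b$ makes the increments $\omega(e_k) - b$ mean zero, so $S_j - bj$ is a nondegenerate mean-zero random walk (unless $\nu$ is a point mass) with $\limsup = +\infty$ a.s., and the supremum diverges; the same argument in fact yields $|\gamma([i,j])|_\omega \le \lambda(j-i) + r_0(\omega,\lambda)$ for any $\lambda > b$, and $2b$ is just a clean choice. Second, if $\gamma$ is bi-infinite one does need the strong law on both tails; these involve disjoint edge sets and hence are independent, but independence is not even invoked, since $\sup_{i,j}(a_j + b_i) \le \sup_j a_j + \sup_i b_i$ holds unconditionally. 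The finite and one-sided-infinite cases are automatic, the relevant suprema then being over finite index ranges. Aside from this bookkeeping, the statement is an immediate consequence of the SLLN.
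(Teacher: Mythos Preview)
Your argument is correct and is essentially the standard proof: reduce to showing that the centered random walks $S_j - 2bj$ (forward) and $2bi - S_i$ (backward) have finite supremum, which follows from the SLLN since their increments have strictly negative mean $-b$. Note, however, that the paper does not actually supply a proof of this proposition; it is quoted as Lemma~2.3 of \cite{BT17}, so there is no in-paper proof to compare against---your write-up is exactly the argument one finds there.
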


\begin{proposition}[Lemma 2.4 \cite{BT17}] \label{shrinkingprop} 
Let $X$ be an infinite connected graph with bounded degree and assume that $\nu({0})=0.$ There exists an increasing function $\alpha:(0,\infty)\to (0,1]$ such that $\lim_{t\to0}\alpha(t) = 0,$ and such that for all finite path $\gamma$ and all $\epsilon>0,$
$$\mathbb{P}(|\gamma|_\omega \leq \epsilon|\gamma|) \leq \alpha(\epsilon)^{|\gamma|}.$$
\end{proposition}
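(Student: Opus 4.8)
The plan is to establish the bound for one fixed path by a threshold-and-counting argument: a small $\omega$-length forces a definite proportion of the edges of $\gamma$ to carry very small weight, and since $\nu(\{0\})=0$ this latter event has probability geometrically small in the number of edges. Write $n=|\gamma|$ and list the edges of $\gamma$ as $e_1,\dots,e_n$. We may assume $\gamma$ traverses each edge at most once, so that the weights $\omega(e_1),\dots,\omega(e_n)$ are independent with common law $\nu$ and $|\gamma|_\omega=\sum_{i=1}^n\omega(e_i)$; this is the regime in which the estimate is applied (cf.\ Proposition~\ref{prop:upper_bnd}, stated for self-avoiding paths).

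Fix $\epsilon>0$ and introduce the threshold $\delta=2\epsilon$. Call an edge $e_i$ \emph{small} if $\omega(e_i)\le\delta$, and let $L$ be the number of non-small (\emph{large}) edges. On the event $\{|\gamma|_\omega\le\epsilon n\}$ every large edge contributes more than $2\epsilon$ to the sum, so $2\epsilon\,L<\sum_{i}\omega(e_i)\le\epsilon n$, forcing $L<n/2$. Hence at least $n/2$ of the edges are small, i.e.
\[
\{|\gamma|_\omega\le\epsilon n\}\ \subseteq\ \bigl\{\text{at least }\lceil n/2\rceil\text{ of the }e_i\text{ satisfy }\omega(e_i)\le 2\epsilon\bigr\}.
\]

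Let $q(\epsilon)=\nu\bigl([0,2\epsilon]\bigr)=\mathbb{P}(\omega(e_i)\le 2\epsilon)$. The event on the right is the union, over the $\binom{n}{\lceil n/2\rceil}\le 2^n$ index sets of size $\lceil n/2\rceil$, of the event that all edges in that set are small; by independence each such event has probability $q(\epsilon)^{\lceil n/2\rceil}\le q(\epsilon)^{n/2}$. A union bound therefore gives
\[
\mathbb{P}\bigl(|\gamma|_\omega\le\epsilon n\bigr)\ \le\ 2^n\,q(\epsilon)^{n/2}\ =\ \bigl(2\sqrt{q(\epsilon)}\bigr)^{\,n}.
\]
Now set $\alpha(\epsilon)=\min\bigl\{1,\ 2\sqrt{\nu([0,2\epsilon])}+\rho(\epsilon)\bigr\}$, where $\rho$ is any fixed strictly increasing function with $\rho(\epsilon)\in(0,1)$ and $\rho(\epsilon)\to0$ as $\epsilon\to0$ (for instance $\rho(\epsilon)=1-e^{-\epsilon}$), inserted only to guarantee values in $(0,1]$. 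Since $\epsilon\mapsto\nu([0,2\epsilon])$ is non-decreasing, $\alpha$ is increasing; and because $\nu(\{0\})=0$, continuity from above of $\nu$ gives $\nu([0,2\epsilon])\downarrow\nu(\{0\})=0$, so $\alpha(\epsilon)\to0$ as $\epsilon\to0$. As $\alpha(\epsilon)\ge 2\sqrt{q(\epsilon)}$ whenever the latter is $\le 1$ (the bound being trivial otherwise), the displayed inequality yields $\mathbb{P}(|\gamma|_\omega\le\epsilon|\gamma|)\le\alpha(\epsilon)^{|\gamma|}$ for every $\gamma$ and every $\epsilon$, with $\alpha$ independent of $\gamma$ — which is the essential feature.

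The one genuine design choice is the calibration $\delta=2\epsilon$ (any fixed ratio $\delta/\epsilon>1$ works): it is what makes a constant fraction of the edges small on the relevant event, so that the union-bound exponent is linear in $n=|\gamma|$ rather than sublinear; and it is precisely $\nu(\{0\})=0$ that drives $\alpha(\epsilon)\to0$. The remaining steps — the distinct-edge reduction and forcing $\alpha$ into $(0,1]$ — are routine, and I expect no substantial obstacle. A slicker variant replaces the counting by a Chernoff bound via the Laplace transform $\phi(\lambda)=\mathbb{E}[e^{-\lambda\omega_e}]$, giving $\mathbb{P}(|\gamma|_\omega\le\epsilon n)\le(e^{\lambda\epsilon}\phi(\lambda))^n$ for all $\lambda\ge0$ and $\alpha(\epsilon)=\inf_{\lambda\ge0}e^{\lambda\epsilon}\phi(\lambda)$, where again $\nu(\{0\})=0$ forces $\phi(\lambda)\to0$ as $\lambda\to\infty$ and hence $\alpha(\epsilon)\to0$.
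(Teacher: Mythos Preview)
The paper does not supply its own proof of this proposition; it is quoted as Lemma~2.4 of \cite{BT17} and used as a black box. Your threshold-and-counting argument is correct and is the standard route: on $\{|\gamma|_\omega\le\epsilon n\}$ at least $\lceil n/2\rceil$ edges must have weight at most $2\epsilon$, and a union bound over the at most $2^n$ index sets of that size gives $(2\sqrt{\nu([0,2\epsilon])})^{n}$, with $\nu(\{0\})=0$ forcing the base to $0$ as $\epsilon\to0$. The reduction to paths with distinct edges is the right move---independence of the $\omega(e_i)$ is what makes the bound work, and every application of this lemma in the paper (and in \cite{BT17}) is to self-avoiding paths.

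One small remark on the Chernoff variant you mention at the end: $\alpha(\epsilon)=\inf_{\lambda\ge0}e^{\lambda\epsilon}\phi(\lambda)$ can vanish (e.g.\ if $\nu$ is supported on $[c,\infty)$ for some $c>0$ and $\epsilon<c$), so the same truncation device you used in the main argument is needed there as well if one wants the codomain to be $(0,1]$. This does not affect the inequality itself, which is trivially true when the infimum is $0$.
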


\begin{proposition} [\cite{BT17}, Lemma 2.5]\label{prop:lower_bound} Let $X$ be an infinite connected graph with bounded degree, and let $o$ be some vertex of $X$. Assume $\nu(\{0\})=0$. Then there exists $c>0$ such that for a.e. $\omega$, there exists $r_{1}=r_{1}(\omega)$ such that for all finite path $\gamma$ such that $d(\gamma, o) \leq |\gamma|$, one has

\[|\gamma|_{\omega} \geq c|\gamma|-r_{1}\]

\end{proposition}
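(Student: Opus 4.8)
The plan is to run a first-moment (union bound) argument combined with the Borel--Cantelli lemma, using the exponential deviation estimate of Proposition~\ref{shrinkingprop} to control the $\omega$-length of any \emph{individual} path, and the bounded-degree hypothesis to control \emph{how many} paths are relevant. Write $n = |\gamma|$ for the combinatorial (edge) length of a path and let $D$ be the maximal degree of $X$. The combinatorial heart of the matter is a counting estimate. Since each vertex meets at most $D$ edges, there are at most $D^n$ edge paths of length $n$ issuing from a fixed vertex, and the ball $B(o,2n)$ contains at most $C D^{2n}$ vertices. Moreover, if $d(\gamma,o)\le |\gamma| = n$ then some vertex of $\gamma$ lies in $B(o,n)$, so the initial vertex of $\gamma$ lies in $B(o,2n)$. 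Hence the number $P_n$ of paths $\gamma$ with $|\gamma| = n$ and $d(\gamma,o)\le n$ obeys $P_n \le C D^{3n} =: C M^n$, with $M = D^3$ depending only on the geometry.

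Now fix $c>0$, to be chosen, and consider the bad event
\[
A_n = \{\, \omega : \exists\, \gamma,\ |\gamma| = n,\ d(\gamma, o) \le n,\ |\gamma|_\omega \le c\,n \,\}.
\]
Applying Proposition~\ref{shrinkingprop} with $\epsilon = c$ to each fixed path and summing over the $P_n$ relevant paths gives
\[
\mathbb{P}(A_n) \le P_n\, \alpha(c)^n \le C\, \big( M\,\alpha(c)\big)^n .
\]
Since $\alpha(t) \to 0$ as $t \to 0$, we may fix $c = c(D,\nu) > 0$ small enough that $M\,\alpha(c) < 1$; this choice is uniform and independent of $\omega$, which is exactly what yields the single constant $c$ in the statement. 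Then $\sum_n \mathbb{P}(A_n) < \infty$, so by Borel--Cantelli only finitely many $A_n$ occur for a.e.\ $\omega$; let $N(\omega)$ be such that $A_n$ fails for all $n \ge N(\omega)$. For such $\omega$ and every admissible path with $n = |\gamma| \ge N(\omega)$ we get $|\gamma|_\omega > c\,|\gamma| \ge c|\gamma| - r_1$. For the finitely many short paths, $n < N(\omega)$, we simply use $|\gamma|_\omega \ge 0$ together with $c|\gamma| < cN(\omega)$; thus setting $r_1(\omega) = c\,N(\omega)$ makes $|\gamma|_\omega \ge c|\gamma| - r_1$ hold for all admissible $\gamma$ simultaneously.

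The only genuine obstacle is balancing the two competing exponential rates: the number of candidate paths grows like $M^n$ with $M$ fixed by the degree bound, while the probability that any one of them is abnormally short decays like $\alpha(c)^n$. The argument closes precisely because the smallness of $\alpha(c)$ as $c\to 0$ can be made to dominate the fixed combinatorial growth $M$, and because the restriction $d(\gamma,o)\le |\gamma|$ confines the relevant paths near $o$ so that their count stays merely exponential in $n$; without this localization the family of all paths of a given length in the infinite graph would be unbounded and the union bound would break down. It is worth recording explicitly that $c$ depends only on $D$ and $\nu$, whereas all of the $\omega$-dependence is absorbed into $r_1(\omega)$.
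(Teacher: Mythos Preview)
The paper does not supply its own proof of this proposition; it is quoted verbatim as Lemma~2.5 of \cite{BT17} and used as a black box. Your argument is correct and is exactly the standard union-bound/Borel--Cantelli proof one expects here (and is the method of \cite{BT17}): count the admissible paths of length $n$ by exploiting $d(\gamma,o)\le n$ to localize the starting vertex inside $B(o,2n)$, invoke Proposition~\ref{shrinkingprop} for each such path, and choose $c$ small enough that $\alpha(c)$ beats the exponential growth $D^{3}$. The same mechanism reappears later in the paper in the proofs of Lemma~\ref{lem:overthought} and Lemma~\ref{lem:radial}, so your write-up fits the paper's overall toolkit seamlessly.
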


\subsection{Basic probabilistic tools }
For the reader’s convenience, we collect in this section a few basic results that will be used in the sequel. While these are standard, their inclusion makes the paper more self-contained.

\begin{lemma}[Borel--Cantelli]\label{lem:b-c}
Let $(A_n)_{n\ge1}$ be a sequence of events in a probability space
$(\Omega,\mathcal{F},\mathbb{P})$.
\begin{enumerate}
    \item[\textnormal{(1)}] 
    If $\displaystyle\sum_{n=1}^{\infty} \mathbb{P}(A_n) < \infty$, then
    \[
    \mathbb{P}\bigl(A_n \text{ i.o.}\bigr)
    = \mathbb{P}\Bigl(\limsup_{n\to\infty} A_n\Bigr) = 0.
    \]
    \item[\textnormal{(2)}] 
    If the events $(A_n)$ are independent and 
    $\displaystyle\sum_{n=1}^{\infty} \mathbb{P}(A_n) = \infty$, then
    \[
    \mathbb{P}\bigl(A_n \text{ i.o.}\bigr) = 1.
    \]
\end{enumerate}
\end{lemma}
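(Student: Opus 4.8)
The plan is to treat the two parts separately, since they rest on different principles: part (1) needs only countable subadditivity together with the convergence of the tail of a convergent series, whereas part (2) genuinely exploits independence, combined with the elementary bound $1-x\le e^{-x}$ and continuity of measure from above.

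For part (1), I would first record the event in its standard form
\[
\limsup_{n\to\infty}A_n=\bigcap_{N=1}^{\infty}\bigcup_{n=N}^{\infty}A_n,
\]
so that for every fixed $N$ one has the inclusion $\limsup_{n}A_n\subseteq\bigcup_{n\ge N}A_n$. Applying monotonicity and countable subadditivity of $\mathbb{P}$ then yields
\[
\mathbb{P}\Bigl(\limsup_{n\to\infty}A_n\Bigr)\le\mathbb{P}\Bigl(\bigcup_{n=N}^{\infty}A_n\Bigr)\le\sum_{n=N}^{\infty}\mathbb{P}(A_n).
\]
Since $\sum_{n}\mathbb{P}(A_n)<\infty$, the right-hand side is the tail of a convergent series and tends to $0$ as $N\to\infty$; as the left-hand side is independent of $N$, it must equal $0$. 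This step is routine and poses no real difficulty.

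For part (2), I would instead estimate the complementary event
\[
\Bigl(\limsup_{n\to\infty}A_n\Bigr)^{c}=\bigcup_{N=1}^{\infty}\bigcap_{n=N}^{\infty}A_n^{c}
\]
and show it is null. Fixing $N$ and an arbitrary $M\ge N$, independence of the $A_n$ (hence of the $A_n^{c}$) gives
\[
\mathbb{P}\Bigl(\bigcap_{n=N}^{M}A_n^{c}\Bigr)=\prod_{n=N}^{M}\bigl(1-\mathbb{P}(A_n)\bigr)\le\exp\Bigl(-\sum_{n=N}^{M}\mathbb{P}(A_n)\Bigr),
\]
where the inequality uses $1-x\le e^{-x}$ valid for all real $x$. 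Because $\sum_{n}\mathbb{P}(A_n)=\infty$, letting $M\to\infty$ drives the exponent to $-\infty$, so the bound tends to $0$. Continuity of measure from above, applied to the decreasing sequence $\bigcap_{n=N}^{M}A_n^{c}\downarrow\bigcap_{n\ge N}A_n^{c}$, then forces $\mathbb{P}\bigl(\bigcap_{n\ge N}A_n^{c}\bigr)=0$ for every $N$. A final use of countable subadditivity gives $\mathbb{P}\bigl((\limsup_{n}A_n)^{c}\bigr)=0$, that is, $\mathbb{P}(\limsup_{n}A_n)=1$.

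The one genuinely delicate point, and the step I would flag as the main obstacle, is the passage to the limit in $M$ in part (2): one must justify replacing the finite intersection by the countable one, which is exactly where continuity of measure (a consequence of countable additivity) enters. It is also worth being careful that the independence hypothesis is used only through its finite subfamilies inside the product before the limit is taken, since independence of a countable family is precisely the statement that every finite subfamily is independent.
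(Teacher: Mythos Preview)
Your proof is correct and is the standard textbook argument for both parts of Borel--Cantelli. The paper itself does not prove this lemma at all: it is stated in the preliminaries as a basic probabilistic tool ``included to make the paper more self-contained,'' with no accompanying proof, so there is nothing to compare against beyond noting that your write-up would serve perfectly well if a proof were desired.
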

The second part of this lemma will play a central role in several arguments throughout the paper.

\begin{definition}
    Let $X$ be a random variable. The \emph{distribution function} of $X$ is defined as
    \[F_X(x) = \mathbb{P}(X\leq x).\]
\end{definition}

\subsection{Background in coarse geometry}

\begin{definition}[Quasi-isometric embedding] \label{Def:Quasi-Isometry} 
Let $(X , d_X)$ and $(Y , d_Y)$ be metric spaces. For constants $q \geq 1$ and
$Q \geq 0$, we say a map $f \from X \to Y$ is a 
$(q, Q)$-\textit{quasi-isometric embedding} if, for all points $x_1, x_2 \in X$
$$
\frac{1}{q} d_X (x_1, x_2) - Q  \leq d_Y \big(f (x_1), f (x_2)\big) 
   \leq q \, d_X (x_1, x_2) + Q.
$$
If, in addition, every point in $Y$ lies in the $Q$-neighborhood of the image of 
$f$, then $f$ is called a $(q, Q)$-quasi-isometry. When such a map exists, $X$ 
and $Y$ are said to be \textit{quasi-isometric}. 

A quasi-isometric embedding $f^{-1} \from Y \to X$ is called a \emph{quasi-inverse} of 
$f$ if for every $x \in X$, $d_X(x, f^{-1}f(x))$ is uniformly bounded above. 
In fact, after replacing $q$ and $Q$ with larger constants, we assume that 
$f^{-1}$ is also a $(q, Q)$-quasi-isometric embedding, 
\[
\forall x \in X \quad d_X\big(x, f^{-1}f(x)\big) \leq Q \qquad\text{and}\qquad
\forall y \in Y \quad d_Y\big(y, f\,f^{-1}(x)\big) \leq Q.
\]
\end{definition}

A \emph{geodesic ray} in $X$ is an isometric embedding $\beta \from [0, \infty) \to X$. We fix a base-point $\go \in X$ and always assume that $\beta(0) = \go$, that is, a geodesic ray is always assumed to start from this fixed base-point. 
\begin{definition}[Quasi-geodesics] \label{Def:Quadi-Geodesic} 
In this paper, a \emph{quasi-geodesic ray} is a continuous quasi-isometric 
embedding $\beta \from [0, \infty) \to X$  starting from the basepoint $\go$. 
\end{definition}
The additional assumption that quasi--geodesics are continuous is not necessary for the results in this paper to hold, but it is added for convenience and to make the exposition simpler. 

If $\beta \from [0,\infty) \to X$ is a $(\qq, \sQ)$-quasi-isometric embedding, and $f \from X \to Y$ is a $(q, Q)$-quasi-isometry then the composition  $f \circ \beta \from [t_{1}, t_{2}] \to Y$ is a quasi-isometric embedding, but it may not be continuous. However, one can adjust the map slightly to make it continuous (see Definition 2.2 \cite{QRT1}) such that $f \circ \beta$ is a $(q\qq, 2(q\qq + q \sQ + Q))$-quasi-geodesic ray.

Similar to above, a \emph{geodesic segment} is an isometric embedding 
$\beta \from [t_{1}, t_{2}] \to X$ and a \emph{quasi-geodesic segment} is a continuous 
quasi-isometric embedding \[\beta \from [t_{1}, t_{2}] \to X.\] 


Note that, if a segment is presented without a subscript, for example $[y_{1}, y_{2}]$, then it is a geodesic segment between the two points.
Let $\beta$ be a quasi-geodesic ray. Define 
\[
\Norm{x} : = d(\go, x).
\]

\begin{definition}[Morse property] \label{Def:Morse}
Let $(X,d)$ be a geodesic metric space. A set $\Gamma\subseteq X$ is called \emph{Morse} if it satisfies the following property:

For every $q \ge 1$ and $Q \ge 0$, there exists a constant $M = M(q,Q)$, called a \emph{Morse gauge}, such that every $(q,Q)$--quasi-geodesic $\gamma$ with endpoints on $\Gamma$ stays within the $M$-neighborhood of $\gamma$; that is,
\[
  \gamma \subseteq N_M(\Gamma).
\]

In particular, a quasi-geodesic is called \emph{Morse} if it admits a Morse gauge.

\end{definition}


\begin{definition} \label{Def:hyperbolic}\cite{gromov}
Let $(X,d)$ be a geodesic metric space and let $\delta \ge 0$.
\begin{enumerate}
    \item For any $x,y,z \in X$, a geodesic triangle $\triangle xyz$ is said to be \emph{$\delta$-slim} if each side of the triangle is contained in the $\delta$--neighborhood of the union of the other two sides; that is,
    \[
    [x,y] \subseteq N_\delta([y,z] \cup [x,z])
    \]
    and similarly for the other sides.
    
    \item The space $(X,d)$ is called \emph{$\delta$-hyperbolic} (or \emph{Gromov-hyperbolic}) if every geodesic triangle in $X$is $\delta$-slim.
\end{enumerate}
A space is \emph{hyperbolic} if it is $\delta$-hyperbolic for some
$\delta \ge 0$.
\end{definition}

\subsection{Cayley graphs and group actions}\label{sec:cay}
\begin{definition}
Let $G$ be a group and let $S \subseteq G$ be a symmetric generating set, i.e. $s\in S$ if and only if $s^{-1}\in S$. The \emph{Cayley graph} $X = \Cay(G,S)$ is the undirected graph defined as follows:
\begin{enumerate}
    \item The vertex set is 
    \[
        V(X) = \{ g \mid g \in G \}.
    \]
    \item The edge set is 
    \[
        E(X) = \bigl\{ \{g,h\} \mid g,h \in G,~ g^{-1}h \in S \bigr\}.
    \]
\end{enumerate}
Henceforth, we will assume \emph{every} graph, Cayley or otherwise, is \emph{undirected}.
\end{definition}

\begin{definition}
Let $G$ be a group and $X$ a set.  
A \emph{(left) action} of $G$ on $X$, $G \curvearrowright X$, is a map
\[
\cdot \colon G \times X \longrightarrow X, \quad (g,x) \mapsto g \cdot x,
\]
such that for all $g,h \in G$ and $x \in X$:
\begin{enumerate}
    \item $e \cdot x = x$, where $e$ is the identity of $G$,
    \item $g \cdot (h \cdot x) = (gh) \cdot x$.
\end{enumerate}
\end{definition}


\begin{definition}
Let $G$ be a group acting by homeomorphisms (or isometries) on a topological
(or metric) space $X$.
\begin{enumerate}
    \item The action is \emph{free} if $g \cdot x = x$ for some $x \in X$
    implies $g = e$.
    \item The action is \emph{properly discontinuous} if for every compact set
    $K \subseteq X$, the set
    \[
    \{ g \in G : gK \cap K \neq \emptyset \}
    \]
    is finite.
    \item The action is \emph{cocompact} if the quotient space $X/G$
    is compact.
\end{enumerate}
\end{definition}


\begin{example}
Let $X = \Cay(G,S)$ be the Cayley graph of a finitely generated group $G$
with finite symmetric generating set $S$.  
The left-regular action
\[
g \cdot h = gh, \qquad g,h \in G,
\]
induces an action $G \curvearrowright X$ by graph automorphisms.  
This action is free, properly discontinuous, and cocompact; in particular, it is a \emph{geometric action}.
\end{example}


\subsection*{ \CAT spaces}
A proper geodesic metric space $(X, d_X)$ is \CAT if geodesic triangles in $X$ are at 
least as thin as triangles in Euclidean space with the same side lengths. To be precise, for any 
given geodesic triangle $\triangle pqr$, consider the unique triangle 
$\triangle \overline p \overline q \overline r$ in the Euclidean plane with the same side 
lengths. For any pair of points $x, y$ on edges $[p,q]$ and $[p, r]$ of the 
triangle $\triangle pqr$, if we choose points $\overline x$ and $\overline y$  on 
edges $[\overline p, \overline q]$ and $[\overline p, \overline r]$ of 
the triangle $\triangle \overline p \overline q \overline r$ so that 
$d_X(p,x) = d_\EE(\overline p, \overline x)$ and 
$d_X(p,y) = d_\EE(\overline p, \overline y)$ then,
\[ 
d_{X} (x, y) \leq d_{\EE^{2}}(\overline x, \overline y).
\] 

\begin{figure}[h]
    \centering
    \includegraphics[width=0.6\linewidth]{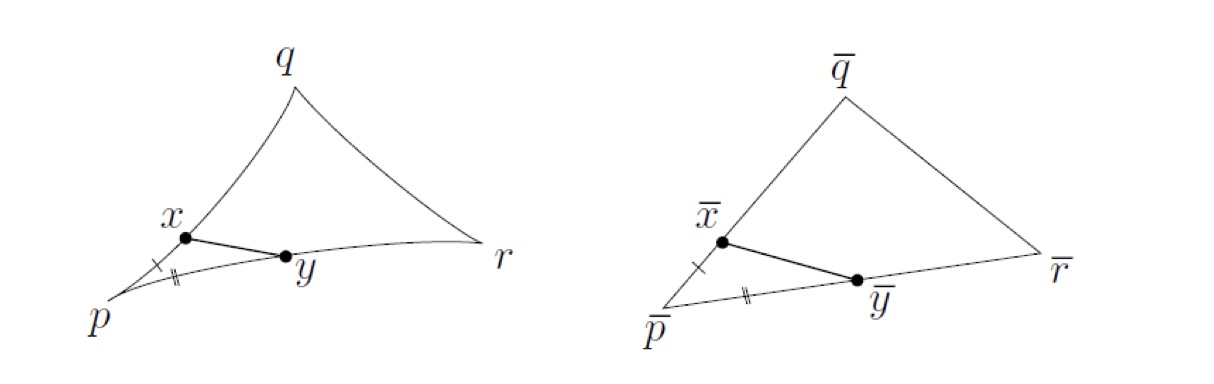}
            \caption{ \cite{BH1} Non-fat triangles: The distance between the points x and y is no greater
than the distance between their comparison points
$\overline x$ and $\overline y$.}
            \label{fig:non-fat}
        \end{figure}
 A metric space $X$ is {\it proper} if closed metric balls are compact. For the remainder of the paper, we assume $X$ is a proper \CAT space. Here, we list some properties of proper \CAT spaces that are needed later (see 
 \cite{BH1}). 

\begin{lemma} \label{Lem:CAT} 
 A proper \CAT space $X$ has the following properties:
\begin{enumerate}[i.]
\item It is uniquely geodesic, that is, for any two points $x, y$ in $X$, 
there exists exactly one geodesic connecting them. Furthermore, $X$ is contractible 
via geodesic retraction to a base point in the space. 
\item The nearest-point projection from a point $x$ to a geodesic line $b$
is a unique point denoted $x_b$. In fact, the closest-point projection map
\[
\pi_b \from X \to b
\]
is Lipschitz. 
\end{enumerate}
\end{lemma}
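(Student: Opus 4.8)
The plan is to derive all four assertions directly from the comparison inequality defining CAT(0) spaces, following the standard development in \cite{BH1}; throughout I freely use that the stated inequality (for points on the two sides issuing from a common vertex) is equivalent both to the comparison inequality for arbitrary pairs of points on the triangle and to the \textbf{CN inequality}, by \cite[Prop.~II.1.7]{BH1}.

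For uniqueness of geodesics, I would argue by a degenerate-triangle comparison. Suppose $c, c' \from [0,L] \to X$ are two unit-speed geodesics from $x$ to $y$. Apply the comparison inequality to the triangle $\triangle xyy$ with sides $c$, $c'$, and the constant path at $y$: taking $p = x$, its two issuing sides are $c$ and $c'$, while the Euclidean comparison triangle degenerates to the segment $[\overline x, \overline y]$. The comparison points of $c(t)$ and $c'(t)$ both equal the point of $[\overline x, \overline y]$ at distance $t$ from $\overline x$, so $d(c(t), c'(t)) \le d_{\EE^2}(\overline{c(t)}, \overline{c'(t)}) = 0$ and hence $c = c'$. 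Using these unique geodesics, define the geodesic retraction $H \from X \times [0,1] \to X$ by $H(x,s) = \gamma_x(s)$, where $\gamma_x \from [0,1] \to X$ is the affinely parametrized geodesic from $\go$ to $x$; then $H(x,0) = \go$ and $H(x,1) = x$ for every $x$. Continuity of $H$ is the only point needing care: applying the comparison inequality to $\triangle \go\, x\, x'$ gives $d(\gamma_x(s), \gamma_{x'}(s)) \le s\, d(x,x')$ for all $s$, and combined with the continuity of $\Norm{x}$ this yields joint continuity, so $X$ is contractible to $\go$.

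For part (ii), existence of a nearest point is immediate from properness: $p \mapsto d(x,p)$ is continuous on the closed set $b$, and minimizing it over the compact set $b \cap \overline{B}(x,R)$ for large $R$ produces a minimizer $x_b$ with $d(x,x_b) = d(x,b) =: r$. Uniqueness follows from strict convexity: if $p_1, p_2 \in b$ both realize $r$, let $m$ be the midpoint of $[p_1,p_2] \subseteq b$. In the comparison triangle $\triangle \overline x\, \overline p_1\, \overline p_2$, the midpoint $\overline m$ of $[\overline p_1, \overline p_2]$ satisfies $d_{\EE^2}(\overline x, \overline m) < r$ whenever $\overline p_1 \ne \overline p_2$, so the comparison inequality (equivalently, CN) gives $d(x,m) \le d_{\EE^2}(\overline x, \overline m) < r$, contradicting minimality unless $p_1 = p_2$.

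Finally, for the (indeed $1$-)Lipschitz property of $\pi_b$, I would invoke convexity of the metric together with an angle estimate. Given $x, y$ with projections $x_b, y_b$, the crucial input is that the Alexandrov angle at $x_b$ between $[x_b, x]$ and $b$ is at least $\pi/2$ (otherwise moving slightly along $b$ would decrease the distance to $x$, contradicting minimality), and likewise at $y_b$. Feeding these bounds into the Euclidean comparison triangles and using convexity of $t \mapsto d(\sigma(t), \tau(t))$ for the geodesics $\sigma = [x_b, x]$ and $\tau = [y_b, y]$ yields $d(x_b, y_b) \le d(x, y)$. I expect this last step to be the main obstacle: one must carefully justify the angle-at-least-$\pi/2$ condition and convert it, via the comparison inequality, into the nonexpansion estimate. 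Every other assertion reduces to a single application of the comparison inequality together with properness.
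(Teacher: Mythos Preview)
The paper does not actually prove this lemma: it is stated as background with the parenthetical ``(see \cite{BH1})'' and no argument is given. Your proposal is a correct outline of exactly the standard proofs one finds in Bridson--H\"afliger, so there is no meaningful comparison to make.

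One small point on the final step: the sentence ``convexity of $t \mapsto d(\sigma(t),\tau(t))$ \dots\ yields $d(x_b,y_b)\le d(x,y)$'' is not quite self-contained as written. Convexity alone tells you the function lies below the chord, which bounds interior values by endpoint values, not the other way around. What you actually need is to combine the two $\ge \pi/2$ angle conditions (one at $x_b$, one at $y_b$) via two comparison triangles---e.g.\ $\triangle(x,x_b,y_b)$ and $\triangle(x,y_b,y)$---to get $d(x_b,y_b)\le d(x,y_b)\le d(x,y)$, or else run the quadrilateral argument of \cite[Prop.~II.2.4]{BH1} directly. You already flagged this as the step needing care, and the ingredients you listed are the right ones; just be aware that ``convexity plus angle'' has to be unpacked a little more explicitly than your last paragraph suggests.
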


\needspace{5\baselineskip}
\section{First-passage percolation for hyperbolic graphs}\label{sec:fpp-hyper}
In this section we will discuss if hyperbolicity is preserved under first passage percolations. Recall the definition of Gromov hyperbolicity in Defintion~\ref{Def:hyperbolic}, classical examples include trees (which are $0$-hyperbolic), the hyperbolic plane 
$\mathbb{H}^2$. In this paper we examine the Cayley graphs of word-hyperbolic groups, which are all Gromov hyperbolic. 
Gromov hyperbolicity is a quasi-isometry invariant: if two metric spaces are quasi-isometric 
and one is Gromov hyperbolic, so is the other.

A technical point arises when discussing hyperbolicity in the context of first-passage percolation. Let \(X\) be a connected graph, and assign to each edge \(e\) a nonnegative length \(\omega(e)\), interpreted as the passage time across \(e\). While this vertex-based metric suffices for many probabilistic questions, it is not the most natural framework for geometric considerations, such as Gromov hyperbolicity. Indeed, the definition of \(\delta\)-hyperbolicity involves geodesics and their subdivisions (e.g.\ midpoints), and these need not be realized at vertices of \(X\). For this reason, we pass to the \emph{metric realization of the graph \(X_\omega\)}: each edge \(e\) is replaced by a genuine interval of length \(\omega(e)\), and we consider the resulting random metric space. Formally, for each edge $e=(v_1,v_2)\in E$, we glue the endpoints of an interval of length $\omega(e)$ to $v_1$ and $v_2$. In this \emph{metric graph}, geodesics are continuous paths parameterized by arc-length, and notions such as midpoints are always well-defined. Crucially, this construction preserves the $\omega$-distance between vertices while providing a setting in which the standard definition of \(\delta\)-hyperbolicity applies without artificial restrictions to the discrete vertex set. For Sections \ref{sec:fpp-hyper}, \ref{sec:fpp_cat0}, and \ref{sec:fpp_Morse}, $X_\omega$ shall represent the metric realization of $X$ after percolation.

We first introduce the following example, which captures the proof idea for general cases.
\begin{example}
    Suppose $X$ is the Cayley graph of $G =\mathbb{Z}/2\mathbb{Z}*\mathbb{Z}/3\mathbb{Z}$ with presentation $G =\langle a,b~|~a^2,b^3\rangle$. See Figure \ref{fig:non-hyper}. Then $X$ contains countably many copies of a non-degenerate triangle (that is, any two edges do not contain the third edge) with 3 edges making up the geodesics of the triangle. Call these triangles $T_n$ with edges $e_{n,1},e_{n_2},e_{n,3}$. Note $X$ is 1-hyperbolic. Suppose $\text{Supp}(\omega_e) = (0,\infty)$. We claim that for $a.e. ~\omega,$ the metric realization of $X_\omega$ is not $\delta$-hyperbolic for any $\delta\geq 0$.
    \vspace{1cm}
    
     \begin{figure}
    \centering
    \includegraphics[width=0.5\linewidth]{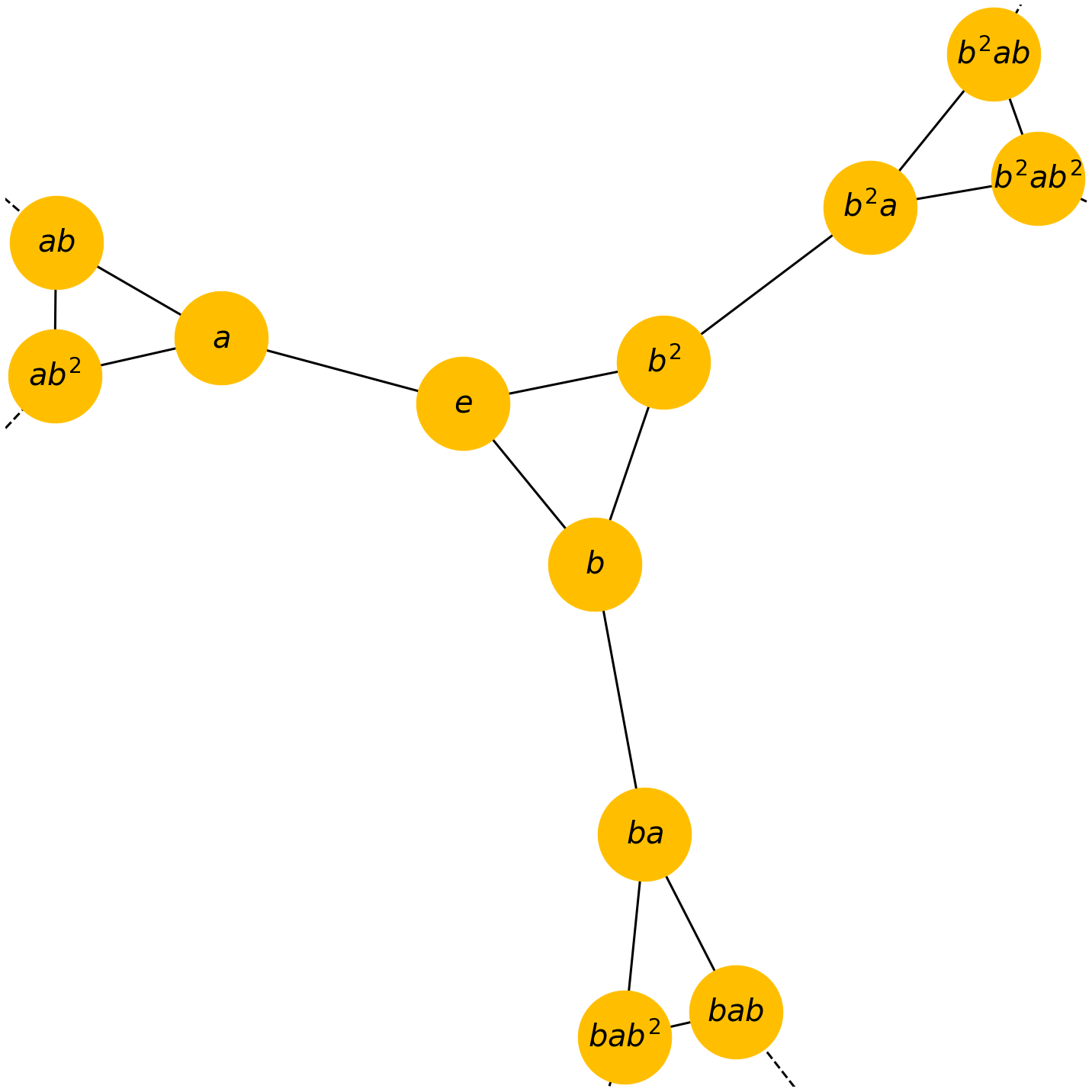}
            \caption{$\Cay(\mathbb{Z}/2\mathbb{Z}*\mathbb{Z}/3\mathbb{Z}, \langle a,b ~\big|~ a^2,b^3\rangle)$.}
            \label{fig:non-hyper}
        \end{figure}
        
    \begin{proof}
        Suppose by way of contradiction that $X_\omega$ is $\delta$-hyperbolic for some $\delta\geq 0$. We note that after percolation, the edges of each $T_n$ may not be geodesics, so we compute the probability that a given $\omega(T_n)$ has each side length greater than $ 3\delta$ and less than $ 4\delta$. This event, call it $A_n$, corresponds to $\omega(T_n)$ not being $\delta$-hyperbolic, but also each edge of $\omega(T_n)$ being a geodesic. That is to say $\omega(T_n)$ is a non-degenerate $\omega$-triangle which is not $\delta$-slim. See Figure \ref{fig:Tn}. The probability of $A_n$ occurring is given by

        \begin{figure}
            \centering
            \tikzset{every picture/.style={line width=0.75pt}} 

\begin{tikzpicture}[x=0.75pt,y=0.75pt,yscale=-1,xscale=1]

\draw   (339.71,100.5) -- (415.58,192) -- (263.83,192) -- cycle ;
\draw  [color={rgb, 255:red, 208; green, 2; blue, 27 }  ,draw opacity=1 ][dash pattern={on 4.5pt off 4.5pt}] (242.04,216.56) .. controls (228.41,203.63) and (242.04,165.81) .. (272.47,132.08) .. controls (302.91,98.34) and (338.63,81.48) .. (352.26,94.41) .. controls (365.89,107.34) and (352.26,145.16) .. (321.83,178.89) .. controls (291.39,212.63) and (255.67,229.49) .. (242.04,216.56) -- cycle ;
\draw  [color={rgb, 255:red, 208; green, 2; blue, 27 }  ,draw opacity=1 ][dash pattern={on 4.5pt off 4.5pt}] (324.31,93.93) .. controls (339.43,81.07) and (376.32,98.21) .. (406.7,132.2) .. controls (437.08,166.19) and (449.45,204.17) .. (434.33,217.02) .. controls (419.2,229.87) and (382.31,212.74) .. (351.93,178.75) .. controls (321.55,144.76) and (309.18,106.78) .. (324.31,93.93) -- cycle ;
\draw [color={rgb, 255:red, 208; green, 2; blue, 27 }  ,draw opacity=1 ][fill={rgb, 255:red, 224; green, 30; blue, 54 }  ,fill opacity=1 ]   (384.58,153.75) -- (409.08,135.5) ;
\draw [color={rgb, 255:red, 208; green, 2; blue, 27 }  ,draw opacity=1 ]   (297.33,152) -- (273.75,130.83) ;

\draw (266.58,90.15) node [anchor=north west][inner sep=0.75pt]  [font=\footnotesize]  {$T_{n}$};
\draw (398.83,148.02) node [anchor=north west][inner sep=0.75pt]  [font=\scriptsize,color={rgb, 255:red, 208; green, 2; blue, 27 }  ,opacity=1 ]  {$\delta $};
\draw (274.5,145.36) node [anchor=north west][inner sep=0.75pt]  [font=\scriptsize,color={rgb, 255:red, 208; green, 2; blue, 27 }  ,opacity=1 ]  {$\delta $};
\draw (331.33,179.57) node [anchor=north west][inner sep=0.75pt]  [font=\scriptsize]  {$e_{2}$};
\draw (304.67,145.23) node [anchor=north west][inner sep=0.75pt]  [font=\scriptsize]  {$e_{3}$};
\draw (364.67,144.9) node [anchor=north west][inner sep=0.75pt]  [font=\scriptsize]  {$e_{1}$};
\draw (329.33,200.9) node [anchor=north west][inner sep=0.75pt]  [font=\scriptsize]  {$3\delta $};
\draw (290,118.57) node [anchor=north west][inner sep=0.75pt]  [font=\scriptsize]  {$3\delta $};
\draw (368.67,122.23) node [anchor=north west][inner sep=0.75pt]  [font=\scriptsize]  {$3\delta $};

\end{tikzpicture}

            \caption{Diagram showing that $T_n$ having side $\omega$-lengths $3\delta$ is not $\delta$-hyperbolic, but is an $\omega$--triangle.}
            \label{fig:Tn}
        \end{figure}
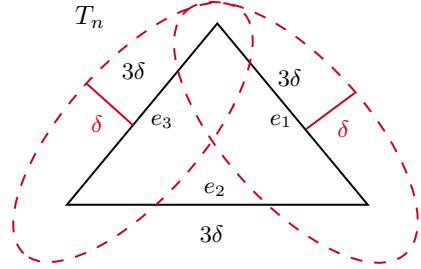
        \begin{align*}
            \mathbb{P}(3\delta &< \omega(e_{n,1}) <4\delta,~3\delta < \omega(e_{n,2}) <4\delta,~3\delta < \omega(e_{n,3}) <4\delta)\\
            &= \mathbb{P}(3\delta < \omega(e_{n,1}) \leq 4\delta)^3\quad(\text{Since $\omega_e$ are iid.})\\
            &= (F_{\omega_e}(4\delta) - F_{\omega_e}(3\delta))^3=\epsilon^3>0.
        \end{align*}
        This $\epsilon$ exists by the support of $\omega_e$ being $(0,\infty)$.
        \end{proof}
        Since each $T_n$ differs only by translation by group action, the distribution of each $A_n$ is i.i.d. In particular, each $T_n$ is equivalent under lateral shift. Then 
        \[
        \sum_{n=1}^\infty P(3\delta < \omega(e_{n,1}))^3 = \sum_{n=1}^\infty \epsilon^3 = \infty.
        \]
        So by the second part of the Borel--Cantelli lemma (\ref{lem:b-c}), 
        $$\mathbb{P}(A_n~\text{i.o.}) = 1.$$
        Thus, with probability 1, infinitely many $\omega(T_n)$ will not be $\delta$-hyperbolic. This contradicts $X_\omega$ being $\delta$-hyperbolic.

\end{example}


\begin{definition}
    We call $\Delta_\omega abc$ a,  $\omega$-geodesic triangle in $\left(X,d_\omega\right)$ with vertices $a,b,$ and $c$ with edges being the $\omega$-geodesics $[a,b]_\omega, [b,c]_\omega,$ and $[c,a]_\omega$.
\end{definition}
Note that $\Delta abc$ and $\Delta_\omega abc$ have the same (triangle) vertices; however, their edges and (graph) vertices along each edge need not agree.

\begin{proposition}[FPP maps cycles to cycles]\label{claim:cycletocycle} Let $X$ be a graph that contains a $k$-cycle $\gamma: [0,k]\to X$ with vertices $v_1,v_2,\dots,v_k,v_1$ such that $\gamma(i)=v_{i+1}$ for $i\in \ZZ/k\ZZ$. Suppose Supp$(\nu) = (0,\infty)$ and $\nu(\{0\}) = 0$. Then for $a.e.$ $\omega$, $X_\omega$, $\omega(\gamma)$ is a cycle of finite $\omega$-length in $X_\omega$ with vertices  $v_1,v_2,\dots,v_k,v_1$.
\begin{proof}
    \[\bigl|\gamma\bigr|_\omega = \sum_{i=1}^{k+1} \omega(e_i) <\infty~\text{a.s.}\]
    as $\omega(e_i)$ is almost surely finite for each $1\leq i\leq k+1$.
    Furthermore, $\omega(\gamma(i)) = \omega(v_{i+1}) = v_{i+1} \in X_\omega$ for all $1\leq i\leq k+1$. Therefore, $\omega(\gamma)$ a.s. has finite length with vertices  $v_1,v_2,\dots,v_k,v_1$.
    
    Lastly, by assumption, $\nu(\{0\}) = 0$. Thus, for $a.e.~\omega$, $\bigl|e_i\bigr|_\omega > 0$ for all edges $e_i$ in the $k$-cycle. Therefore, with probability 1, the cycle does not collapse to a point.
\end{proof}
\end{proposition}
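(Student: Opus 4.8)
The statement to prove is Proposition~\ref{claim:cycletocycle}: if $X$ contains a $k$-cycle $\gamma$, then for a.e.~$\omega$, the image $\omega(\gamma)$ is a genuine (non-degenerate, finite-length) cycle in the metric realization $X_\omega$. The plan is to unwind this into three elementary almost-sure statements, each of which follows from a basic property of the edge-weight distribution $\nu$, and then intersect the corresponding full-measure sets.

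First, I would record that $\omega(\gamma)$ has finite $\omega$-length. Since $\gamma$ consists of exactly $k$ edges $e_1,\dots,e_k$, its $\omega$-length is the finite sum $|\gamma|_\omega = \sum_{i=1}^k \omega(e_i)$. Each $\omega(e_i)$ is a sample from $\nu$, which is supported on $[0,\infty)$ and has finite expectation (hence is almost surely finite); a finite sum of a.s.-finite random variables is a.s.~finite. This gives a full-measure event $\Omega' \subset \Omega$ on which $|\omega(\gamma)|_\omega < \infty$.

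Second, I would observe that $\omega(\gamma)$ remains a cycle combinatorially: the metric realization $X_\omega$ has the same vertex set and edge set as $X$ — only the edge lengths change — so the sequence of vertices $v_1,\dots,v_k,v_1$ still traces out a closed edge-path. This step is purely structural and requires no probability. Third, and this is the only place where the hypothesis $\nu(\{0\})=0$ enters, I would rule out degeneration: the cycle could collapse to a point (or an arc) only if some edge $e_i$ received $\omega$-length zero. For each fixed edge $e$, $\PP(\omega(e)=0) = \nu(\{0\}) = 0$, and since the cycle has only finitely many edges, $\PP(\exists i:\omega(e_i)=0) \le \sum_{i=1}^k \nu(\{0\}) = 0$. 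Thus on a full-measure event $\Omega'' \subset \Omega$, every edge of $\omega(\gamma)$ has strictly positive length, so the cycle does not collapse. Taking $\omega \in \Omega' \cap \Omega''$, all three properties hold simultaneously, which is the assertion.

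I do not anticipate any genuine obstacle here: the proposition is a warm-up lemma whose content is entirely bookkeeping — finiteness of a finite sum, invariance of the combinatorial structure of the graph under re-weighting, and a union bound over $k$ events each of probability zero. The only thing to be slightly careful about is the distinction between $\gamma$ as a path in $X$ and $\omega(\gamma)$ as its image in the metric realization $X_\omega$; once one fixes the convention that $X_\omega$ and $X$ share the same underlying combinatorial graph (as set up in the discussion of metric realizations preceding this proposition), there is nothing left to do. (One could even drop the support hypothesis $\mathrm{Supp}(\nu)=(0,\infty)$ for this particular statement, since only $\nu(\{0\})=0$ and finiteness of the mean are used; I would state it as given for consistency with the surrounding results.)
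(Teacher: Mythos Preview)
Your proposal is correct and follows essentially the same three-step argument as the paper: finiteness of the $\omega$-length from a.s.\ finiteness of each edge weight, preservation of the combinatorial cycle structure under re-weighting, and non-degeneracy from $\nu(\{0\})=0$. Your version is slightly more explicit (naming the full-measure events and spelling out the union bound), and your observation that only $\nu(\{0\})=0$ is actually used here is accurate.
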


\subsection{Proof of Theorem \ref{introthm1}}
To prove the main theorem, let us consider the following proposition,
\begin{proposition} \label{prop:nondslim}
    Suppose $0<\delta< \infty$. Suppose $X$ contains a $k$-cycle with vertices $v_1,v_2,\dots,v_k,v_1$. Further, suppose the image of this cycle contains an edge $e_i = [v_i,v_{i+1}]$ satisfying $\bigl|e_i\bigr|_\omega \geq 4\delta$. Then $X_\omega$ contains a triangle that is not $\delta$-slim.
    \begin{proof}
    \textbf{Case 1:} Suppose $\omega(e_i)$ is not an $\omega$-geodesic connecting $v_i$ to $v_{i+1}$. Then there exists some $\omega$-geodesic path in $X_\omega$ from $v_i$ to $v_{i+1}$ with $\omega$-length $\leq 4\delta$ that does not contain $\omega(e_i)$. Call this path $P$. Observe that the concatenation of $\omega(e_i)$ with $P$ is a cycle in $X_\omega$ and $\bigl|P\bigr|_\omega \leq \bigl|e_i\bigr|_\omega$. Let $a$ be the midpoint of $P$. Let $m$ be the midpoint of $\omega(e_i)$. Let $b \in [m,v_{i}]_\omega \subset \omega(e_i)$ be the point satisfying $d_\omega(m,b) = \delta$ Let $c\in [m,v_{i+1}]_\omega \subset \omega(e_i)$ be the point satisfying $d_\omega(m,c) = \delta$. Then $d_\omega(b,v_i) \geq \delta$, similarly $d_\omega(c,v_{i+1}) \geq \delta$. Further, the segment $[b,c]_\omega$ on $\omega(e_i)$ is an $\omega$-geodesic segment as $ d_\omega(b,c) = 2\delta \leq \delta + \bigl|P\bigr|_\omega + \delta \leq d_\omega(b,v_i) + \bigl|P\bigr|_\omega + d_\omega(v_{i+1},c)$. We need not consider other paths as $P$ is an $\omega$-geodesic path connecting $v_i$ to $v_{i+1}$.

    Then the path $[a,v_i]_\omega$ along $P$ concatenated with $[v_i,b]_\omega$ along $\omega(e_i)$ is an $\omega$-geodesic connecting $a$ and $b$. Similarly, $[a,v_{i+1}]_\omega$ along $P$ concatenated with $[v_{i+1},c]$ along $\omega(e_i)$ is an $\omega$-geodesic connecting $a$ to $c$. So $\Delta_\omega abc$ is a non-degenerate $\omega$-geodesic triangle in $X_\omega$. However, $N_\delta([a,b]_\omega) \cup N_\delta([a,c]_\omega)$ does not contain $m \in [b,c]_\omega$. Thus $\Delta_\omega abc$ is not $\delta$-slim.

    \textbf{Case 2:} Suppose $\omega(e_i)$ is an $\omega$-geodesic connecting $v_i$ to $v_{i+1}$. Recall that the concatenation $\omega(e_1),\dots,\omega(e_i),\dots,\omega(e_k)$ is a cycle in $X_\omega$. Denote the path $\omega(e_{i+1}),\omega(e_{i+2}),\dots,\omega(e_{i-1})$ as $P$. That is all edges of the cycle not including $\omega(e_i)$. Then $P$ is a path connecting $v_i$ to $v_{i+1}$. Since $\omega(e_i)$ is a geodesic connecting $v_i$ to $v_{i+1}$, $\bigl|P\bigr|_\omega \geq \bigl|e_i\bigr|_\omega \geq 4\delta$. Then there exists some point $q \in P$ satisfying $d_\omega(v_i,q) \geq 2\delta$, $d_\omega(q,v_{i+1}) \geq 2\delta$, and $d_\omega(q,v_i) = d_\omega(q,v_{i+1})$. This point $q$ exists as $\bigl|P\bigr|_\omega \geq 4\delta$ and for $a.e.~\omega$, $X_\omega$ is a geodesic metric space; thus for $a.e.~\omega$, $d_\omega:X_\omega \times X_\omega \to \mathbb{R}$ is well defined and continuous. Let $a = q$, $b = v_i$, and $c = v_{i+1}$. Then $\Delta_\omega abc$ is a non-degenerate $\omega$-geodesic triangle. Further, $N_\delta([a,b]_\omega) \cup N_\delta([a,c]_\omega)$ does not contain the midpoint of $[b,c]_\omega$, so $\Delta_\omega abc$ is not $\delta$-slim.
    \end{proof}
\end{proposition}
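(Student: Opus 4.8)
The plan is to exhibit, for a.e.\ $\omega$, a single $\omega$-geodesic triangle in $X_\omega$ that is not $\delta$-slim; this is all that is required, since we only claim that $X_\omega$ \emph{contains} such a triangle. Throughout I abbreviate $x=v_i$, $y=v_{i+1}$, $\ell=|e_i|_\omega\ge 4\delta$, and $L=d_\omega(x,y)$. By \propref{claim:cycletocycle} the image $\omega(\gamma)$ is a.s.\ a genuine cycle with positive finite edge-lengths, so $0<L\le\ell$, and the complementary arc $P_0$ (the concatenation $\omega(e_{i+1}),\dots,\omega(e_{i-1})$) is an honest path from $x$ to $y$ whose interior is disjoint from the open edge $e_i$. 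Let $m$ be the midpoint of $\omega(e_i)$.

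The geometric heart of the argument is a locality observation. In the metric realization $X_\omega$, the open edge $e_i$ is attached to the rest of the graph only at $x$ and $y$, so any path reaching an interior point of $e_i$ must enter through $x$ or $y$. Hence for every $p\notin\mathrm{int}(e_i)$,
\[
d_\omega(p,m)=\min\{d_\omega(p,x),\,d_\omega(p,y)\}+\tfrac{\ell}{2}\ \ge\ \tfrac{\ell}{2}\ \ge\ 2\delta .
\]
This no-shortcut fact is what keeps $m$ robustly far from everything that does not run along $e_i$ itself, no matter how $X_\omega$ is connected elsewhere.

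Now I construct the triangle. Set $\delta''=(\ell+L)/4$; from $\ell\ge 4\delta$ and $L>0$ one checks $\delta<\delta''\le\ell/2$. Let $b,c$ be the points of $\omega(e_i)$ at distance $\delta''$ from $m$ toward $x$ and toward $y$, respectively (so $b=x,c=y$ exactly when $\delta''=\ell/2$, which recovers the case in which $e_i$ is itself an $\omega$-geodesic). The sub-arc of $e_i$ from $b$ to $c$ has $\omega$-length $2\delta''$, while the only competing route $b\to x\to y\to c$ has length $(\ell/2-\delta'')+L+(\ell/2-\delta'')=\ell+L-2\delta''=2\delta''$; hence that sub-arc is an $\omega$-geodesic $[b,c]_\omega$ passing through $m$. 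For the apex, I apply the intermediate value theorem to the continuous function $p\mapsto d_\omega(x,p)-d_\omega(y,p)$ along $P_0$ to get a point $a\in P_0$ with $d_\omega(x,a)=d_\omega(y,a)=:\rho$; since $2\rho\ge L>0$, the point $a$ lies in the interior of $P_0$, off $e_i$, so $\Delta_\omega abc$ is non-degenerate. Because $a$ is equidistant from $x$ and $y$, any geodesic from $a$ to $b$ entering $e_i$ through $y$ would be longer by exactly $2\delta''$ than one entering through $x$ (the candidate lengths are $\rho+\ell/2-\delta''$ versus $\rho+\ell/2+\delta''$), so $[a,b]_\omega$ enters through $x$, and symmetrically $[a,c]_\omega$ enters through $y$. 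Combining this with the displayed locality estimate, every point of $[a,b]_\omega\cup[a,c]_\omega$ is at distance at least $\delta''$ from $m$: points outside $e_i$ are at distance $\ge\ell/2\ge\delta''$, while points on the sub-arc $[x,b]\subset e_i$ (resp.\ $[y,c]$) are at distance $\ge\delta''$ along $e_i$, the nearest being $b$ (resp.\ $c$) at distance exactly $\delta''$. As $\delta''>\delta$, the point $m\in[b,c]_\omega$ lies outside $N_\delta([a,b]_\omega)\cup N_\delta([a,c]_\omega)$, so $\Delta_\omega abc$ is not $\delta$-slim.

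I expect the main obstacle to be making the no-shortcut reasoning fully rigorous: one must argue that geodesics into $\mathrm{int}(e_i)$, and the geodesics $[a,b]_\omega,[a,c]_\omega$, cannot exploit unknown shortcuts through the ambient graph $X_\omega$. This is exactly what the observation that $\mathrm{int}(e_i)$ is a pendant attached only at $x$ and $y$ resolves, and it is the step that lets the single unified construction cover both the case where $e_i$ is an $\omega$-geodesic and the case where it is not. A minor point worth stating cleanly is that it suffices to falsify slimness for one choice of geodesic representatives of the three sides, so non-uniqueness of $\omega$-geodesics causes no trouble.
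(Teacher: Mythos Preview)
Your proof is correct and takes a genuinely different route from the paper's. The paper splits into two cases according to whether $e_i$ is itself an $\omega$-geodesic: in Case~1 it uses the $\omega$-geodesic $P$ from $v_i$ to $v_{i+1}$ to locate the apex and places $b,c$ at distance $\delta$ from $m$; in Case~2 it uses the complementary cycle arc and takes $b=v_i$, $c=v_{i+1}$. You instead give a single construction with $\delta''=(\ell+L)/4$, which interpolates between the two cases (recovering $b=x$, $c=y$ exactly when $L=\ell$), and you always take the apex on the complementary arc $P_0$ via the intermediate value theorem. Your explicit ``locality observation'' that $\mathrm{int}(e_i)$ is accessible only through $x$ and $y$, hence $d_\omega(p,m)\ge \ell/2$ for $p\notin\mathrm{int}(e_i)$, is the key structural fact that lets you rule out ambient shortcuts uniformly; the paper's Case~1 argues geodesicity of the sub-arc $[b,c]$ only against the single competitor through $P$ and does not address other paths in $X_\omega$, so your treatment is tighter on exactly the point you flagged as the expected obstacle. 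The cost of your approach is the extra bookkeeping with $\delta''$, but the payoff is a unified, shortcut-proof argument that handles both regimes at once.
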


\begin{theorem} \label{theorem:A}
Let $X$ be a graph with bounded degree with the assumption that, for every bounded set $B$, at least one component of $X \setminus B$ is not a tree. Suppose Supp$(\nu)=(0,\infty)$ and $\nu(\{0\}) = 0$, further assume that $\EE \omega_e < \infty$. Then for $a.e.$ $\omega$, the metric realization of $X_\omega$ is not Gromov hyperbolic.

\begin{proof}Suppose, by way of contradiction, that $X_\omega$ is $\delta$-slim for some $\delta<\infty$.
From the assumption, it is clear that there exist infinitely many disjoint cycles, namely \{$C_n: n\in \mathbb{N}$\}. Suppose $E'\subset E $ is the set of all edges in {$C_n$} for each $n$. Since $\nu(\{0\}) = 0$ and $\nu$ has unbounded support, we have $\mathbb{P}(\omega(e_n) \geq 4\delta) = \epsilon$ for some $\epsilon>0$. Thus $\sum_{n=1}^\infty \mathbb{P}(\omega(e_n) \geq 4\delta) = \sum_{n=1}^\infty \epsilon = \infty$. Since the $\omega$-length of each edge is independent, by the second part of the Borel--Canelli Lemma, $\mathbb{P}(\omega(e_n) \geq 4\delta ~i.o.) = 1$ for every $e_n \in E'$. Thus, with probability 1, there exist infinitely many cycles $K_n$ $\subset$ \{$C_n: n\in \mathbb{N}$\} such that in $X_\omega$, $|e_n|_\omega\geq4\delta$ for some $e_n \in K_n$. Now by Proposition ~\ref{prop:nondslim} for $a.e.$ $\omega$, $X_\omega$ contains infinitely many triangles that are not $\delta$-slim. This contradicts the assumption that $X_\omega$ is $\delta$-hyperbolic. 
\end{proof}

\end{theorem}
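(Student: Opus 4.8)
The plan is to argue by contradiction, converting one ``long'' edge inside each of infinitely many pairwise disjoint cycles into infinitely many geodesic triangles that fail to be $\delta$-slim.

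Fix $\delta\in(0,\infty)$ and suppose, toward a contradiction, that the metric realization $X_\omega$ is $\delta$-slim. The first step is to extract from the hypothesis a sequence of pairwise disjoint cycles $C_1,C_2,\dots$ in $X$: a connected graph is a tree precisely when it contains no cycle, so a non-tree component of $X\setminus B$ always contains an embedded (hence finite) cycle; letting $B$ run through an increasing exhaustion of $X$ by balls about the basepoint $\go$ and choosing $B$ at stage $n+1$ large enough to contain $C_1,\dots,C_n$, we obtain the $C_n$ mutually disjoint. By \propref{claim:cycletocycle}, for a.e.\ $\omega$ each image $\omega(C_n)$ is again a finite, non-degenerate cycle in $X_\omega$.

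Next, pick one edge $e_n$ of each $C_n$. Disjointness of the $C_n$ makes the edges $e_n$ distinct, so the weights $\{\omega(e_n)\}_{n}$ are i.i.d.\ with law $\nu$; since $\mathrm{Supp}(\nu)=(0,\infty)$, the number $p:=\PP\bigl(\omega(e_n)\ge 4\delta\bigr)$ is a fixed positive constant, whence $\sum_n p=\infty$ and, by \lemref{lem:b-c}(2), almost surely $\omega(e_n)\ge 4\delta$ for infinitely many $n$. For each such $n$, \propref{prop:nondslim} applied to the cycle $C_n$ and its long edge $e_n$ produces an $\omega$-geodesic triangle in $X_\omega$ that is not $\delta$-slim. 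This is where $\nu(\{0\})=0$ and the standing assumption $\EE\,\omega_e<\infty$ enter: they ensure, via \propref{prop:upper_bnd} and \propref{prop:lower_bound}, that for a.e.\ $\omega$ the space $X_\omega$ is a geodesic metric space in which $\omega$-geodesics and their midpoints are defined, so that $\delta$-slimness is meaningful. We have thus contradicted the assumed $\delta$-slimness, so for each fixed $\delta\in(0,\infty)$ the event $\{X_\omega \text{ is } \delta\text{-slim}\}$ has measure zero.

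It remains to pass from ``not $\delta$-slim, for each fixed $\delta$'' to ``not Gromov hyperbolic'', which is a quantifier point: a $\delta$-slim space is automatically $\delta'$-slim for every $\delta'\ge\delta$, so $X_\omega$ fails to be hyperbolic exactly when it fails to be $N$-slim for every $N\in\NN$. Taking $\Omega_1:=\bigcap_{N\in\NN}\{\,\omega : X_\omega \text{ is not } N\text{-slim}\,\}$ --- a countable intersection of full-measure events, hence full measure --- completes the proof. The step I expect to require the most care is the first one: producing genuinely \emph{disjoint} cycles $C_n$, since it is disjointness that makes the $e_n$ distinct and the events $\{\omega(e_n)\ge 4\delta\}$ independent, hence the second Borel--Cantelli lemma applicable. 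Once that combinatorial point is in place, the rest is a routine assembly of \propref{prop:nondslim} with the cited probabilistic lemmas.
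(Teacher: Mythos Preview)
Your proposal is correct and follows the same approach as the paper: extract infinitely many disjoint cycles, apply the second Borel--Cantelli lemma to one edge per cycle to get $\omega(e_n)\ge 4\delta$ infinitely often, and then invoke \propref{prop:nondslim} to produce non-$\delta$-slim triangles. Your explicit countable intersection over $\delta\in\NN$ is a useful tightening of a quantifier the paper leaves implicit, and your aside invoking \propref{prop:upper_bnd} and \propref{prop:lower_bound} is not really needed (the metric realization is geodesic simply because each edge is replaced by an interval of positive length a.s., given $\nu(\{0\})=0$), but neither point affects the argument.
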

From the above theorem, the following corollary about infinite hyperbolic groups follows directly.
\begin{corollary}
    Suppose $G$ is a finitely generated, infinite hyperbolic group. Let $S$ be a generating set and let $X= \Cay(G, S)$ with generating set $S$ chosen such that $X$ is not a tree. Suppose $\nu$ has support $(0,\infty)$. Then for a.e. $\omega$ the metric realization of $X_\omega$ is not Gromov hyperbolic.  
\end{corollary}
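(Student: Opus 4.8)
The plan is to obtain the corollary as an immediate application of Theorem~\ref{theorem:A}, so essentially all the work lies in checking its hypotheses for $X=\Cay(G,S)$. Two of these are automatic: since $G$ is finitely generated, $S$ is finite, so every vertex of $X$ has degree at most $|S|$ and $X$ has bounded degree; and since $\mathrm{Supp}(\nu)=(0,\infty)$ we in particular have $\nu(\{0\})=0$, while $\EE\omega_e<\infty$ holds by the blanket assumption that $\nu$ has finite mean. So the only substantive thing to verify is that $X$ is ``not a tree outside of any bounded set'': for every bounded $B\subseteq X$, at least one component of $X\setminus B$ is not a tree.

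For this I would exploit vertex-transitivity of the Cayley graph. Since $X$ is chosen not to be a tree, it contains an embedded cycle $C$; set $d=\diam(C)<\infty$, the diameter of $C$ in the graph metric. The left regular action of $G$ on $X$ is by graph automorphisms and is transitive on vertices, so for every vertex $w$ there is $g\in G$ with $g\cdot C$ an embedded cycle through $w$, and since $g$ acts isometrically we get $g\cdot C\subseteq B(w,d)$. Thus through every vertex of $X$ there passes a cycle of uniformly bounded size.

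Finally, given a bounded set $B$, I would choose $R$ with $B\subseteq B(\go,R)$ and, using that $G$ (hence $X$) is infinite, pick a vertex $w$ with $d(\go,w)>R+d$. Then $B(w,d)$ is disjoint from $B$, so the cycle $g\cdot C$ through $w$ lies entirely in $X\setminus B$; being connected it is contained in a single component of $X\setminus B$, which therefore contains a cycle and is not a tree. This verifies the hypothesis of Theorem~\ref{theorem:A}, which then yields the conclusion. I do not anticipate any real obstacle here: the only content is this elementary transitivity argument. Note in particular that hyperbolicity of $G$ is never used in the argument itself; it is what makes the statement meaningful, since then $X$ is Gromov hyperbolic whereas, for almost every $\omega$, the metric realization $X_\omega$ is not.
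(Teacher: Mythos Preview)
Your proposal is correct and takes essentially the same approach as the paper: both use the group action to translate a single cycle $C$ arbitrarily far from any given bounded set, then invoke Theorem~\ref{theorem:A}. The only cosmetic difference is that the paper phrases this as iteratively producing infinitely many pairwise disjoint translates $g_n\cdot C$ (which is exactly what the hypothesis of Theorem~\ref{theorem:A} is used for in its proof), whereas you verify the literal hypothesis ``not a tree outside any bounded set'' directly; your version is arguably cleaner.
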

\begin{proof}
Since $X$ is not a tree, there exists at least one cycle. Let $C$ be the cycle of minimal length $k$ and diameter $r$. Since $G$ $\curvearrowright$ $X$ is properly discontinuous and cocompact, and in fact, geometric, it sends $C$ to the cycle $g_1\cdot C$ of the same length for some $g_1 \in G$. Suppose $U_1$ is the closed neighborhood of $V(g_1\cdot C)$ of radius $r+1$, where $V(g_1\cdot C)$ is the vertex set of $g_1\cdot C$. Since $X$ is infinite, there exists $g_2 \notin U_1$ and the cycle $g_2 \cdot C$ is disjoint from $g_1 \cdot C$. By repeating this argument, we get infinitely many disjoint cycles of size $k$. Now, from the assumption, suppose that $X_\omega$ is $\delta$-hyperbolic space for some $\delta>0$. Then, by Theorem~\ref {theorem:A} for a.e. $\omega,$ $X_\omega$ contains a triangle that is not $\delta$--slim. 
\end{proof}

\needspace{5\baselineskip}
\subsection{Coarse \CAT spaces and first passage percolations} \label{sec:fpp_cat0}
In this section, we observe that the proof technique from the preceding section can be easily adapted to other situations where the geometry comes from controlling the thinness of triangles. One of these geometries is the \CAT geometry. 

Let $G$ be a finitely generated group. We say $G$ is \CAT if the group acts cocompactly, properly discontinuously, and by isometries on a \CAT space. Examples of such groups include right-angled Artin groups, right-angled Coxeter groups, and groups acting geometrically on spaces of strictly and uniformly negative curvature, such as surface groups. \CAT groups can also be formed via free product and the free amalgamated product of \CAT groups with suitable gluing. We note that Cayley graphs of \CAT groups are quasi-isometric to a \CAT space.
\begin{theorem}
Let $X$ be quasi-isometric to a \CAT space and suppose that there does not exist a bounded ball $B \subset X$ such that $X \setminus B$ is acyclic.  Then for a.e. $\omega$, $X_\omega$ is not quasi-isometric to any \CAT space.
\end{theorem}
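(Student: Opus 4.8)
The plan is to mimic the strategy of Theorem~\ref{theorem:A}, replacing ``not $\delta$-slim'' with ``violates the \CAT comparison inequality'' and then transferring the obstruction across the quasi-isometry. First I would set up for contradiction: suppose $X_\omega$ is $(q,Q)$-quasi-isometric to a \CAT space $Y$. By the hypothesis, $X$ contains infinitely many pairwise disjoint cycles $\{C_n\}$; let $E'$ be the set of edges occurring in these cycles. I would then fix a length scale $L = L(q,Q)$ large enough that a $\omega$-geodesic triangle in $X_\omega$ with one side of $\omega$-length at least $L$ cannot have its image (a quasi-geodesic triangle in $Y$) satisfy the \CAT comparison inequality: since a \CAT space is uniquely geodesic with convex metric and quasi-geodesics track geodesics in a $\delta$-hyperbolic-like fashion only up to additive error, a sufficiently ``fat'' triangle (one whose internal midpoint is far from the other two sides by an amount that grows with $L$) cannot be the coarse image of a thin triangle. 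Concretely I would use the fact that in a \CAT space the distance from the midpoint of one side to the union of the other two sides is bounded by a universal constant (in fact by $0$ in the degenerate comparison, and in general by the Euclidean comparison which is at most the side length but typically much smaller), so by choosing $L$ large the discrepancy exceeds $q\cdot(\text{that bound}) + Q$-type slack, contradicting that the QI distorts distances by at most these constants.

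Next, the probabilistic input. For each cycle $C_n$ choose a distinguished edge $e_n \in E' \cap C_n$. Since $\nu(\{0\})=0$ and $\nu$ has unbounded support, $\mathbb{P}(\omega(e_n) \ge L) = \epsilon > 0$ for all $n$, and since distinct $C_n$ are disjoint these events are independent; hence $\sum_n \mathbb{P}(\omega(e_n)\ge L) = \infty$, and by Borel--Cantelli (Lemma~\ref{lem:b-c}(2)), almost surely $\omega(e_n) \ge L$ for infinitely many $n$. For each such $n$, Proposition~\ref{prop:nondslim} (applied with the role of $4\delta$ played by $L$) produces an $\omega$-geodesic triangle $\Delta_\omega a_n b_n c_n$ in $X_\omega$ with one side of $\omega$-length $\ge L$ whose midpoint of $[b_n,c_n]_\omega$ is at $d_\omega$-distance $\ge L/4$ from $[a_n,b_n]_\omega \cup [a_n,c_n]_\omega$ (this is exactly what the proof of Proposition~\ref{prop:nondslim} establishes, with $\delta$ replaced by $L/4$).

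Now I would push one of these fat triangles through the quasi-isometry $\phi\colon X_\omega \to Y$. The images $\phi([a_n,b_n]_\omega)$, etc., are $(q,Q')$-quasi-geodesics in $Y$ (continuity arranged as in the discussion after Definition~\ref{Def:Quadi-Geodesic}); connect $\phi(a_n),\phi(b_n),\phi(c_n)$ by genuine \CAT geodesics to form a comparison triangle in $Y$. By the Morse lemma in \CAT spaces (quasi-geodesics with fixed constants stay within a fixed distance $R = R(q,Q')$ of the geodesic with the same endpoints — a standard fact for \CAT, hence \CAT, spaces), the image midpoint lies within $R$ of the geodesic $[\phi(b_n),\phi(c_n)]$; meanwhile the \CAT comparison inequality forces that geodesic's midpoint to be within a universal bound of $[\phi(a_n),\phi(b_n)] \cup [\phi(a_n),\phi(c_n)]$, and tracking back through the Morse constants and the QI estimates gives $d_\omega\big(\text{midpoint of }[b_n,c_n]_\omega,\ [a_n,b_n]_\omega\cup[a_n,c_n]_\omega\big) \le q(2R + \text{univ}) + Q'$, a constant independent of $n$. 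Choosing $L$ from the outset larger than $4$ times this constant yields a contradiction with the infinitely many fat triangles. The main obstacle I expect is the quantitative \CAT Morse/comparison step: one must be careful that the ``universal bound'' on the midpoint-to-sides distance in a \CAT space really is independent of the triangle (this follows from convexity of the metric, giving in fact a logarithmic or even bounded bound, since \CAT spaces with the geodesic extension behavior we need are not assumed hyperbolic — but here we only need the \emph{comparison} bound, which for the midpoint is at most half the Euclidean base, so one instead argues that \emph{thin-ness at the midpoint scale} is violated), and that all the additive constants depend only on $q, Q$ and not on $n$ or $L$ — so that $L$ can be chosen last, after all constants are fixed. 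Handling the case distinction from Proposition~\ref{prop:nondslim} (whether the long edge is itself an $\omega$-geodesic) carries over verbatim and needs no change.
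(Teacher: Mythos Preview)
Your proposal has a fundamental gap: both the ``Morse lemma'' step and the ``universal bound'' step invoke properties that \CAT spaces simply do not possess. The statement that $(q,Q)$-quasi-geodesics stay within a fixed distance $R(q,Q)$ of the geodesic with the same endpoints is a characterization of Gromov hyperbolicity, not of nonpositive curvature; in $\RR^2$ the piecewise-linear path $(0,0)\to(n,n)\to(2n,0)$ is a $(\sqrt{2},0)$-quasi-geodesic whose midpoint sits at distance $n$ from the straight segment. Likewise, there is no universal bound on the distance from the midpoint of one side of a \CAT geodesic triangle to the union of the other two: in the Euclidean plane this distance scales linearly with the side lengths (for an equilateral triangle of side $L$ it equals $L\sqrt{3}/4$). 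Your parenthetical flags exactly this worry but then waves it away; the ``thin-ness at the midpoint scale'' you fall back on is precisely the $\delta$-slim condition, which is hyperbolic and is \emph{not} a consequence of the \CAT comparison inequality.

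Consequently the contradiction never materializes: your fat $\omega$-geodesic triangle in $X_\omega$ maps under a $(q,Q)$-quasi-isometry to a fat quasi-geodesic triangle in $Y$, and a \CAT space such as $\RR^2$ is perfectly content to contain arbitrarily fat (quasi\nobreakdash-)geodesic triangles. The paper sidesteps this by not routing through triangles or Morse stability at all. It argues directly that if $X_\omega$ were $(q,Q)$-quasi-isometric to a \CAT space then the $\omega$-length of every cycle --- and hence of every edge lying on a cycle --- would be bounded in terms of $q$ and $Q$ alone; the Borel--Cantelli step then produces, almost surely, infinitely many edges on the disjoint cycles $\{C_n\}$ whose $\omega$-length exceeds that bound, which is the contradiction. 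In other words, the obstruction the paper exploits is the unique-geodesic/contractibility side of \CAT geometry (Lemma~\ref{Lem:CAT}), not any thinness of triangles.
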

\begin{proof}
Since there does not exist a bounded ball $B \subset X$ such that $X \setminus B$ is acyclic. $X$ contains infinitely many disjoint cycles. Let $\{C_i\}$ be an infinite collection of such cycles and let $x_i \in C_i$ be an edge from each cycle, respectively. Suppose the space is $(q, Q)$-quasi-isometric to a \CAT space, then every cycle can be of length at most $2q+Q$. Therefore, if any edge in a cycle has length more than  $2q+Q$, it is not 
$(q, Q)$-quasi--isometric to a \CAT space. Thus, we need all edges to have $\omega$-length shorter than $2q+Q$.
Since $\nu(\{0\}) = 0$ and $\nu$ has unbounded support, therefore we have $\mathbb{P}(\omega(e_n) \geq 2q+Q) = \epsilon$ for some $\epsilon>0$. Thus $\sum_{n=1}^\infty \mathbb{P}(\omega(e_n) \geq 2q+Q) = \sum_{n=1}^\infty \epsilon = \infty$. Since the $\omega$-length of each edge is independent, by the second part of the Borel--Cantelli Lemma ~\ref{lem:b-c}, $\mathbb{P}(\omega(e_n) \geq 2q+Q ~\text{i.o.}) = 1$ for every $e_n \in E'$. Thus, with probability 1, there exist infinitely many edges with length greater than $2q+Q$, which contradicts the claim that $X_\omega$ is $(q, Q)$-quasi-isometric to a \CAT space. 

\end{proof}

\needspace{5\baselineskip}
\section{First-passage percolation and Morse sets}\label{sec:fpp_Morse}
Morse boundary was first introduced by \cite{contracting} for \CAT spaces and later generalized to all proper geodesic spaces by \cite{Morse} as the first QI-invariant subset of the visual boundary. The definition of Morse (See Definition~\ref{Def:Morse}) ensures quasi-isometric invariance. Let $\alpha$ be a Morse geodesic ray in $X$. We begin by showing that for a set of measure one, infinitely many geodesic segments along $\alpha$ in $X$ are no longer distance-minimizing segments in $X_\omega$.

\begin{proposition} \label{prop:morseprob1}
    Suppose $\alpha$ is a Morse geodesic ray starting at some vertex $\go\in\alpha$.
    Let $\{R_j\}_{j=1}^\infty$ be a sequence such that $ 0<R_1<R_2<\dots$ and $R_j\to \infty$ as $j\to\infty$. Let $B_{R_j}(\go)$ denote the ball of radius $R_j$ centered about $\go$ for all $j\geq 1$. Suppose $\alpha$ has the following property: There exist a fixed $n\geq 1$, and fixed $k\geq n$ such that for any ball, $B_{R_j}(\go)$, there exist two vertices $x_j,~y_j$ on $\alpha$ with the following properties:
        \begin{enumerate}
            \item $d(x_j,y_j) \geq n,$
            \item $x_j,y_j \notin B_{R_j}(\go)$,
            \item there exists a path, $\gamma_j$, of length $k$ joining $x_j$ and $y_j$,
            \item $\gamma_j$ is disjoint from $\alpha$ except at the endpoints $x_j,y_j$,
        \end{enumerate}%
    Suppose $\nu(\{0\}) = 0$, Supp$(\nu)= (0,\infty)$, and $\mathbb E \omega_e <\infty$. Then, there exists some $c>0$ that does not depend on $j$ such that
    \[\mathbb{P}\left(\big|\alpha|_{[x_j,y_j]}\big|_\omega \geq \big|\gamma_j\big|_\omega \right) \geq c. \]
    In particular,\[\mathbb{P}\left(\big|\alpha|_{[x_j,y_j]}\big|_\omega \geq \big|\gamma_j\big|_\omega ~\text{i.o.}\right) = 1.\]
\end{proposition}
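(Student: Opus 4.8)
The strategy is to fix a single vertex pair/path triple at "scale" $j$, bound below the probability of the comparison event $\{|\alpha|_{[x_j,y_j]}|_\omega \ge |\gamma_j|_\omega\}$ by a constant independent of $j$, and then upgrade to the "infinitely often" statement by extracting a suitable independent subsequence and applying the second Borel--Cantelli lemma.

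First I would set up the two competing quantities. The segment $\alpha|_{[x_j,y_j]}$ of $\alpha$ between $x_j$ and $y_j$ has graph-length at least $n$ (by hypothesis (1)), and by Proposition~\ref{prop:upper_bnd} applied with the self-avoiding path $\alpha$ we get, for a.e.\ $\omega$, a lower bound on its $\omega$-length only through the generic linear-growth estimates; but for a \emph{probabilistic} lower bound valid at every scale I would instead argue directly: by Proposition~\ref{shrinkingprop}, the probability that a fixed finite path has $\omega$-length at most $\epsilon$ times its graph-length decays like $\alpha(\epsilon)^{(\text{length})}$, so we can choose a threshold $L>0$ and $\epsilon$ small so that $\mathbb{P}(|\alpha|_{[x_j,y_j]}|_\omega \ge L) \ge 3/4$ uniformly in $j$ --- here we use that the graph-distance between $x_j$ and $y_j$ is bounded below by $n$ but may be large, which only helps. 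On the other side, $\gamma_j$ has graph-length exactly $k$, so $|\gamma_j|_\omega = \sum_{i=1}^k \omega(e_i)$ is a sum of $k$ i.i.d.\ copies of $\nu$; since $\mathbb{E}\,\omega_e < \infty$ and $\nu$ has no atom at $0$, we may pick $L$ large enough (simultaneously with the above) that $\mathbb{P}(|\gamma_j|_\omega \le L) \ge 3/4$ uniformly in $j$, using that the law of $|\gamma_j|_\omega$ does not depend on $j$ (all $\gamma_j$ have the same graph-length $k$). The key structural point is hypothesis (4): $\gamma_j$ is disjoint from $\alpha$ except at its endpoints, so the edge sets of $\alpha|_{[x_j,y_j]}$ and of $\gamma_j$ are disjoint, hence the two events above are independent, and
\[
\mathbb{P}\bigl(|\alpha|_{[x_j,y_j]}|_\omega \ge |\gamma_j|_\omega\bigr) \;\ge\; \mathbb{P}\bigl(|\alpha|_{[x_j,y_j]}|_\omega \ge L \ \text{and}\ |\gamma_j|_\omega \le L\bigr) \;\ge\; \tfrac34\cdot\tfrac34 \;=\; c \;>\;0,
\]
with $c$ independent of $j$. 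This proves the first assertion.

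For the "infinitely often" conclusion I would pass to a subsequence along which the relevant edge sets are pairwise disjoint: since $x_j, y_j \notin B_{R_j}(\go)$ and $R_j \to \infty$, and since the graph has bounded degree (so every $\gamma_j$ together with the sub-arc of $\alpha$ has bounded diameter once we also control where $\alpha$ sits, or more simply: the paths $\gamma_j$ escape to infinity so we can greedily choose $j_1 < j_2 < \cdots$ with the supporting edge sets $E(\gamma_{j_\ell}) \cup E(\alpha|_{[x_{j_\ell},y_{j_\ell}]})$ pairwise disjoint). Along this subsequence the events $A_{j_\ell} = \{|\alpha|_{[x_{j_\ell},y_{j_\ell}]}|_\omega \ge |\gamma_{j_\ell}|_\omega\}$ are independent, each has probability $\ge c$, so $\sum_\ell \mathbb{P}(A_{j_\ell}) = \infty$, and the second part of Borel--Cantelli (Lemma~\ref{lem:b-c}) gives $\mathbb{P}(A_{j_\ell} \ \text{i.o.}) = 1$; a fortiori $\mathbb{P}(A_j \ \text{i.o.}) = 1$.

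The main obstacle I anticipate is the uniform-in-$j$ lower bound $\mathbb{P}(|\alpha|_{[x_j,y_j]}|_\omega \ge L) \ge 3/4$: the sub-arc $\alpha|_{[x_j,y_j]}$ has graph-length at least $n$ but possibly much larger, so one must phrase the bound so that extra length cannot hurt --- this is exactly what Proposition~\ref{shrinkingprop} provides, since $\alpha(\epsilon)^{m} \le \alpha(\epsilon)^{n}$ for $m \ge n$ once $\alpha(\epsilon) \le 1$, so $\mathbb{P}(|\alpha|_{[x_j,y_j]}|_\omega \le \epsilon n) \le \mathbb{P}(|\alpha|_{[x_j,y_j]}|_\omega \le \epsilon\, |\alpha|_{[x_j,y_j]}|) \le \alpha(\epsilon)^n$, which can be made $< 1/4$ by choosing $\epsilon$ small; then set $L = \epsilon n$. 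The rest is bookkeeping: confirming the independence from the edge-disjointness in hypothesis (4), and the greedy subsequence extraction from conditions (2)--(3) together with bounded degree.
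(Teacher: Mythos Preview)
Your overall strategy matches the paper's: pick a threshold $L$, use edge-disjointness from hypothesis~(4) to factor $\mathbb{P}(|\alpha|_{[x_j,y_j]}|_\omega \ge L)\cdot\mathbb{P}(|\gamma_j|_\omega \le L)$, then invoke the second Borel--Cantelli lemma. On the last step you are in fact more careful than the paper, which applies Borel--Cantelli directly without verifying independence of the events $A_j$; your greedy extraction of a subsequence with pairwise disjoint edge sets (using that $\alpha|_{[x_j,y_j]}$ lies outside $B_{R_j}(\go)$ and $\gamma_j$ lies outside $B_{R_j-k}(\go)$) is the correct way to justify this.

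There is, however, a slip in your ``simultaneously $3/4$'' claim. Your appeal to Proposition~\ref{shrinkingprop} produces a threshold $L=\epsilon n$ with $\epsilon$ small, while $\mathbb{P}(|\gamma_j|_\omega \le L)\ge 3/4$ demands that $L$ exceed the $3/4$-quantile of a sum of $k$ i.i.d.\ $\nu$-samples; these need not be compatible (e.g.\ when $n=k=1$ and $\nu$ is atomless, $\mathbb{P}(X\ge L)+\mathbb{P}(X'\le L)=1$ for i.i.d.\ $X,X'\sim\nu$, so the two cannot both reach $3/4$). The fix, which is essentially what the paper does, is to drop Proposition~\ref{shrinkingprop} and use the cruder bound $\mathbb{P}\bigl(|\alpha|_{[x_j,y_j]}|_\omega \ge L\bigr)\ge \mathbb{P}(\omega(e_1)\ge L)>0$, valid for \emph{every} $L>0$ uniformly in $j$ because $\mathrm{Supp}(\nu)=(0,\infty)$ and the sub-arc contains at least one edge. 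Then any $L>0$ works, both factors are positive and independent of $j$, and their product gives the required constant $c$.
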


\begin{proof}
    Let $j \geq 1$. Then, there exist some $n\geq 1$ and $k\geq n$ such that there exist vertices $x_j,y_j\in \alpha$ with $d(x_j,y_j) = n$ and $x_j,y_j\notin B_{R_j}(\go)$. Since $\alpha$ is a geodesic, the segment $\alpha|_{[x_j,y_j]}$ is a geodesic with length $n$. Further, there is a path $\gamma_j$ of length $k$ joining $x_j$ and $y_j$. Let $D >0$. Then
    \begin{align*}
    \mathbb{P}\left(\big|\alpha|_{[x_j,y_j]}\big|_\omega \geq \big|\gamma_j\big|_\omega \right) &\geq \mathbb{P}\left(\big|\alpha|_{[x_j,y_j]}\big|_\omega \geq \big|\gamma_j\big|_\omega \geq D \right) \\
        &= \mathbb{P}\left(\sum_{e\in \alpha|_{[x_j,y_j]}}\omega(e) \geq \sum_{e\in \gamma_j }\omega(e) \geq D\right)\\
        &\geq \mathbb{P}\left(\sum_{i=1}^n\omega(e_i) \geq \sum_{l=1}^k\omega(e_l) \geq D\right)\\
        &\geq \mathbb{P}\left(\sum_{i=1}^n\omega(e_i) \geq 3D,~2D\geq \sum_{l=1}^k\omega(e_l) \geq D\right)\\
        &= \mathbb{P}\left(\sum_{i=1}^n\omega(e_i) \geq 3D\right)\mathbb{P}\left(2D\geq \sum_{l=1}^k\omega(e_l) \geq D\right)\\
        &\geq \mathbb{P}\left(\omega(e_n) \geq 3D\right)P\left(2D/k \geq\omega(e_k) \geq D/k\right)^k\\
        &= (1-F_{\omega(e)}(3D))(F_{\omega(e)}(2D/k) - F_{\omega(e)}(D/k))^k\\
        &= c_1c_2^k = c>0
    \end{align*}
    as Supp$(\nu) = (0,\infty)$. Thus
    \begin{align*}
            \sum_{j=1}^\infty \mathbb{P}\left(\big|\alpha|_{[x_j,y_j]}\big|_\omega \geq \big|\gamma_j\big|_\omega  \right) \geq \sum_{j=1}^\infty c = \infty.
    \end{align*}
    Therefore, by the second part of the Borel--Cantelli lemma (\ref{lem:b-c}),
    \[
    \mathbb{P}\left(\big|\alpha|_{[x_j,y_j]}\big|_\omega \geq \big|\gamma_j\big|_\omega ~\text{i.o.}\right) = 1.
    \]
\end{proof}
Observe that Proposition \ref{prop:morseprob1} implies that for $a.e.~\omega$, infinitely often, the $\omega$-geodesic connecting points $x_j,y_j\in \alpha$ will take a path consisting of at least one edge not contained in $\alpha$. Further, the $\omega$-length of the geodesic segment $\alpha\big|_{[x_j,y_j]}$ will be longer than the $\omega$-length of $\gamma_j$, which will, in turn, be longer than D. Note this does not provide any information on the $\omega$-length of the $\omega$-geodesic connecting $x_j$ to $y_j$.

We are now ready to prove Theorem \ref{introthm5},

\begin{theorem} \label{Thorem C}
   Suppose $\alpha$ is a Morse quasi-geodesic ray with the assumption of the Proposition ~\ref{prop:morseprob1}. Then for $a.e.~\omega$, the image of $\alpha$ (i.e. $\omega(\alpha$)) is not a Morse set.  
\end{theorem}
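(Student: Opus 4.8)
The plan is to derive a contradiction from the assumption that $\omega(\alpha)$ is Morse. Recall that being Morse means: for every pair $(q,Q)$ there is a gauge $M=M(q,Q)$ so that every $(q,Q)$--quasi-geodesic with endpoints on $\omega(\alpha)$ stays in the $M$--neighborhood of $\omega(\alpha)$. So to contradict this I must produce, for some fixed $(q,Q)$, a sequence of $(q,Q)$--quasi-geodesics with endpoints on $\omega(\alpha)$ whose excursions away from $\omega(\alpha)$ are unbounded. The natural candidates are the detour paths $\gamma_j$ from Proposition~\ref{prop:morseprob1}: by that proposition, for a.e.\ $\omega$ we have $|\alpha|_{[x_j,y_j]}|_\omega \ge |\gamma_j|_\omega \ge D$ infinitely often (for each fixed threshold $D$; one can diagonalize over $D \to \infty$ to get a single full-measure set on which this holds with $D_j \to \infty$).

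First I would fix a full-measure $\omega$ on which simultaneously: (i) the conclusion of Proposition~\ref{prop:morseprob1} holds with an escaping threshold $D_j\to\infty$ along an infinite set of indices $j$; (ii) the upper bound of Proposition~\ref{prop:upper_bnd} holds for $\alpha$ (with constant $r_0=r_0(\omega)$); and (iii) the lower bound of Proposition~\ref{prop:lower_bound} holds (with constants $c,r_1=r_1(\omega)$), so that $\omega$-distances along $\alpha$ are comparable to graph distances and $X_\omega$ is a genuine geodesic metric space. On such $\omega$, for each good index $j$ consider the edge-path $\gamma_j$ of bounded length $k$ joining $x_j$ to $y_j$. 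Since $\gamma_j$ has at most $k$ edges and $|\gamma_j|_\omega \ge D_j$, the path $\gamma_j$ is, after arclength reparametrization in $X_\omega$, a $(q,Q)$--quasi-geodesic segment for a \emph{fixed} pair $(q,Q)$ depending only on $k$ and on a crude a.s.\ lower bound for edge lengths — here I would use that $\gamma_j$ is a simple path of $\le k$ edges, so its $\omega$-length and the $\omega$-distance between its endpoints are each squeezed between universal multiples of the number of edges it traverses; the only subtlety is the lower endpoint-distance bound, which follows because $x_j,y_j$ lie on $\alpha$ and $d_\omega(x_j,y_j)$ is controlled below by Proposition~\ref{prop:lower_bound} applied to the $\alpha$-segment, while $d_\omega(x_j,y_j)\le |\gamma_j|_\omega$ from above. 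Thus each $\gamma_j$ is a $(q,Q)$--quasi-geodesic with endpoints on $\omega(\alpha)$.

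Next I would locate a point on $\gamma_j$ that is far from $\omega(\alpha)$. Take the $\omega$-midpoint $m_j$ of $\gamma_j$, so $d_\omega(m_j,x_j)=d_\omega(m_j,y_j)=\tfrac12|\gamma_j|_\omega\ge D_j/2$. I claim $d_\omega(m_j,\omega(\alpha))\to\infty$. Suppose a point $z_j\in\omega(\alpha)$ satisfies $d_\omega(m_j,z_j)\le L$. Then $z_j$ lies within $\omega$-distance $L + D_j/2 + \tfrac12|\gamma_j|_\omega \le L + D_j$ of $x_j$ along $X_\omega$, hence $z_j$ is within $\omega$-distance $\le L+D_j$ of the $\alpha$-subsegment near $x_j, y_j$; but $d_\omega(m_j, x_j)+d_\omega(m_j,y_j)=|\gamma_j|_\omega$ is the $\omega$-length of the \emph{detour}, and by the defining inequality of Proposition~\ref{prop:morseprob1}, going from $x_j$ to $y_j$ along $\alpha$ costs $\ge |\gamma_j|_\omega \ge D_j$, so no shortcut through $\omega(\alpha)$ can bring $\alpha$ within $L$ of $m_j$ unless $L\gtrsim D_j$ — quantitatively, if $z_j=\omega(\alpha(t_j))$ then $d_\omega(x_j,z_j)+d_\omega(z_j,y_j)\ge |\alpha|_{[x_j,y_j]}|_\omega \wedge (\text{cost along }\alpha\text{ past the ends})$, and in either case (combining with the triangle inequality through $m_j$) one gets $2L \ge |\gamma_j|_\omega - \big(d_\omega(x_j,m_j)+d_\omega(m_j,y_j)-|\gamma_j|_\omega\big)$ forcing $L\ge \tfrac12 D_j - O(1)$. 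Hence $d_\omega(m_j,\omega(\alpha)) \ge \tfrac12 D_j - O(1) \to\infty$. Since the $\gamma_j$ are all $(q,Q)$--quasi-geodesics with endpoints on $\omega(\alpha)$ but their midpoints escape every neighborhood of $\omega(\alpha)$, no Morse gauge $M(q,Q)$ can exist, contradicting that $\omega(\alpha)$ is Morse. Therefore for a.e.\ $\omega$ the set $\omega(\alpha)$ admits no Morse gauge.

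The main obstacle I anticipate is making precise the claim that $m_j$ (or some point of $\gamma_j$) is genuinely far from \emph{all} of $\omega(\alpha)$, not merely from the segment $\alpha|_{[x_j,y_j]}$: one must rule out the possibility that some far-away part of $\alpha$ loops back in $X_\omega$ and passes near $m_j$. This is where the inequality $|\alpha|_{[x_j,y_j]}|_\omega \ge |\gamma_j|_\omega$ from Proposition~\ref{prop:morseprob1}, together with the uniform lower bound of Proposition~\ref{prop:lower_bound} (which prevents long $\alpha$-subsegments from having small $\omega$-length), must be leveraged carefully: any path in $X_\omega$ from a point of $\alpha$ near $m_j$ back to $x_j$ or $y_j$ has $\omega$-length comparable to its combinatorial length, so a short detour to $\omega(\alpha)$ would yield a short path between $x_j$ and $y_j$ avoiding $\gamma_j$, contradicting that $\gamma_j$ was (up to the bounded factor) $\omega$-geodesic. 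Handling the bookkeeping of constants so that a \emph{single} pair $(q,Q)$ works for all $j$ — using that $k$ is fixed in the hypothesis of Proposition~\ref{prop:morseprob1} — is the remaining technical point, but it is routine.
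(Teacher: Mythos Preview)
Your plan has a genuine gap at the step where you assert that the detour path $\gamma_j$ is a $(q,Q)$--quasi-geodesic in $X_\omega$ for a \emph{fixed} pair $(q,Q)$. This cannot hold on the events you are using. You are diagonalizing so that $|\gamma_j|_\omega \geq D_j \to \infty$, while the combinatorial distance $d(x_j,y_j)=n$ is fixed for all $j$. For a.e.\ $\omega$ the large-scale Lipschitz bound (Lemma~\ref{lem:lsl}) gives $d_\omega(x_j,y_j) \leq 2bn + r_0(\omega)$, a constant independent of $j$. Hence already at the level of endpoints, the quasi-geodesic inequality $|\gamma_j|_\omega \leq q\,d_\omega(x_j,y_j)+Q$ forces $q \gtrsim D_j$, which blows up. Your appeal to Proposition~\ref{prop:lower_bound} does not help: it lower-bounds the $\omega$--length of a \emph{given} path, not $d_\omega(x_j,y_j)$, and in any case the best lower bound you can get for $d_\omega(x_j,y_j)$ is of order $n$, not of order $D_j$. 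There is also no ``crude a.s.\ lower bound for edge lengths'' when $\mathrm{Supp}(\nu)=(0,\infty)$. The same tension infects your midpoint argument: to push $m_j$ far from $\omega(\alpha)$ you need $D_j\to\infty$, but then your test curves fail to have uniform quasi-geodesic constants, so no single Morse gauge is being violated.

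The paper's proof avoids this trap by testing against the $\omega$--geodesic $\alpha^j_\omega$ joining $x_j$ to $y_j$, which is a $(1,0)$--quasi-geodesic by definition. Proposition~\ref{prop:morseprob1} is used only to conclude that, infinitely often, $\alpha^j_\omega$ cannot coincide with $\alpha|_{[x_j,y_j]}$ and hence contains a vertex $p_j\notin\omega(\alpha)$. The paper then runs a \emph{second}, independent Borel--Cantelli argument: any shortest path $\sigma_j$ from $p_j$ back to $\omega(\alpha)$ contains at least one edge $e_j$, and with positive probability $\omega(e_j)\geq 3D$; since this happens infinitely often, $d_\omega(p_j,\omega(\alpha))>D$ for infinitely many $j$, contradicting any putative gauge $M(1,0)=D$. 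The key structural difference is that the paper never needs the detours $\gamma_j$ themselves to be quasi-geodesics --- it only needs them to certify that the true $\omega$--geodesic leaves $\alpha$.
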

 \begin{proof} By the definition of Morse \ref{Def:Morse}, it is sufficient to show the case where $\alpha$ is in fact a geodesic ray, as any Morse quasi-geodesic is at most bounded distance from a Morse geodesic. Suppose by way of contradiction that $\omega(\alpha)$ admits Morse gauge $M(q,Q)$ with $M(1,0) =D$. For each $j\geq 1$, let $\alpha^j_\omega$ be the $\omega$-geodesic joining $x_j$ and $y_j$. By assumption, $\omega(\alpha)$ is a Morse set. Therefore, for every $j\geq 1$, $\alpha_\omega^{j}$ will be contained in a $D$-neighborhood of $\omega(\alpha)$ in $X_\omega$ as each $\alpha_\omega^j$ is a (1,0)-quasi-$\omega$-geodesic joining $x_j$ to $y_j$. Where $x_j$ and $y_j$ satisfy the conditions from Proposition ~\ref{prop:morseprob1}. Therefore, by Proposition ~\ref{prop:morseprob1}, 
  \[
 \mathbb{P}\left(\big|\alpha|_{[x_j,y_j]}\big|_\omega \geq \big|\gamma_j\big|_\omega  ~\text{i.o.}\right) = 1.
 \]
 
 Thus, for a.e $\omega$, infinitely often, the $\omega$-geodesic, $\alpha_{\omega}^{j}$, contains at least one edge that is not in $\omega(\alpha)$.

 There exists a vertex $p_j \in \alpha_\omega^j$ such that $p_j \notin \omega(\alpha).$ Thus,  $d_\omega(p_j,\omega(\alpha)) \leq D.$ In particular, there is a path $\sigma_j$ connecting $p_j$ to $\omega(\alpha)$ which contains at least one edge, $e_j$ with $d_\omega(p_j,\omega(\alpha)) =\big|\sigma_j\big|_\omega \geq \big|e_j\big|_\omega$. Let $A_j = \{\big|e_j\big|_\omega \geq 3D\}$. Since Supp$(\nu) = (0,\infty)$, $\mathbb{P}(A_j) = \epsilon>0$.  Then, by the second part of the Borel--Canteli Lemma, ~\ref{lem:b-c}, $\mathbb{P}(A_j  ~\text{ i.o.}) = 1$. Thus, for $a.e.~\omega$, $d_\omega(p_j,\omega(\alpha))>3D$ for infinitely many $j\geq 1$. This contradicts $\omega(\alpha)$ admitting Morse gauge $M$.
\end{proof}
 
 The next corollary shows that, in the setting of a finitely generated group \(G\) with Cayley graph \(\Cay(G, S)\), the hypotheses of Proposition~\ref{prop:morseprob1}  are not particularly restrictive. We say that an infinite-order element \(g \in G\) is a \emph{Morse element} (with respect to \(\Cay(G, S)\)), if the orbit map $\alpha \colon \mathbb{N} \longrightarrow \Cay(G, S) \text{ such that } \alpha(n) = g^{n},$ based at the identity element $id_G$, is a \emph{Morse quasi-geodesic}.

\begin{corollary}
Let \(G\) be a finitely generated group with Cayley graph $Cay(G, S)$, and let \(g \in G\) be a Morse element with orbit $\alpha$. Suppose that there exists a path in $Cay(G, S)$ that meets $\alpha$ only at its 
endpoints. Then, for a.e. $\omega$, the image $\omega(\alpha)$ admits no Morse gauge. 
\end{corollary}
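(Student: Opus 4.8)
The plan is to deduce this from \thmref{Thorem C}: it is enough to verify, for the orbit $\alpha$, the ``bridge'' hypothesis of \propref{prop:morseprob1}. By definition of a Morse element, $g$ has infinite order and $\alpha(n)=g^{n}$ is a Morse quasi-geodesic ray based at $\go=\mathrm{id}_G$; fix $q\ge1$, $Q\ge0$ with $\tfrac1q n-Q\le d(\go,g^{n})\le qn+Q$ for all $n\ge0$, and keep the standing hypotheses $\operatorname{Supp}(\nu)=(0,\infty)$, $\nu(\{0\})=0$, $\mathbb{E}\,\omega_e<\infty$. The only new ingredient is that $G$ acts on $\Cay(G,S)$ on the left by graph automorphisms; these are lateral shifts in the sense of \thmref{theorem:lateral_shift}, so they preserve the law of $\{\omega(e)\}_{e\in E}$, and they turn the single bridge supplied by the hypothesis into a family of bridges marching off to infinity along $\alpha$.

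Let $\gamma$ be the given path, not contained in $\alpha$, with distinct endpoints $x=g^{i}$, $y=g^{j}$ on $\alpha$ and interior disjoint from $\alpha$, and put $k=|\gamma|$. For $m\ge1$ the translate $g^{m}\gamma$ is a path of length $k$ joining $x_m:=g^{m+i}$ and $y_m:=g^{m+j}$, and $d(x_m,y_m)=d(x,y)=:n_0\ge1$ since left multiplication is an isometry. As $d(\go,g^{m})\to\infty$ we get $d(\go,x_m),d(\go,y_m)\ge d(\go,g^{m})-\max\{d(\go,x),d(\go,y)\}\to\infty$, so after passing to a subsequence $m_1<m_2<\cdots$ we may choose $R_\ell\to\infty$ with $x_{m_\ell},y_{m_\ell}\notin B_{R_\ell}(\go)$. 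Hence conditions (1)--(3) of \propref{prop:morseprob1} hold with the fixed constants $n_0$ and $k$.

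The delicate step — the one I expect to be the main obstacle — is condition (4), that the bridge meet $\alpha$ only at its endpoints. This is not automatic after translation, because $g^{m}$ maps the bi-infinite line $\widehat\alpha=\{g^{t}:t\in\mathbb{Z}\}$ into itself, so any stretch of $\gamma$ lying along negative powers of $g$ is carried onto a genuine stretch of the ray $\alpha$, and $g^{m}\gamma$ may re-enter $\alpha$ at interior vertices. To handle this, note that $\gamma$ meets $\widehat\alpha$ in only finitely many vertices and that between two consecutive such vertices its interior misses $\widehat\alpha$; for all large $m$ the $g^{m}$-images of those vertices have nonnegative exponent, so $g^{m}\gamma$ touches $\alpha$ exactly along $g^{m}(\gamma\cap\widehat\alpha)$ and each complementary sub-arc has interior disjoint from $\alpha$. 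One of these sub-arcs is, after translation, not an edge of $\alpha$ — by a short case analysis it is either a sub-arc of $\gamma$ that leaves $\widehat\alpha$ or a chord $\{g^{a},g^{b}\}$ with $|a-b|\ge 2$; this uses that $\gamma$ is not contained in $\alpha$ and has interior off $\alpha$. This yields, for each $m$ in our subsequence, a path $\gamma_m$ transverse to $\alpha$, of length $\le k$, joining two vertices of $\alpha$ that subtend an $\alpha$-segment of length $\le q(k+Q)$. The probability estimate in the proof of \propref{prop:morseprob1} is unaffected by these relaxations: it only uses that the edges of $\gamma_m$ and those of the subtended $\alpha$-segment form two disjoint sets, of cardinalities bounded in terms of $k,q,Q$, carrying i.i.d.\ copies of $\omega_e$ with $\operatorname{Supp}(\nu)=(0,\infty)$ and $\nu(\{0\})=0$; so it still gives $\mathbb{P}\big(\big|\alpha|_{[x_m,y_m]}\big|_\omega\ge\big|\gamma_m\big|_\omega\big)\ge c>0$ with $c$ independent of $m$.

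Finally, by thinning the subsequence once more we may assume the $C$-neighborhoods of the $\gamma_m$, for a constant $C=C(k,q,Q)$ bounding the diameters of $\gamma_m$ and of the subtended $\alpha$-segments, are pairwise disjoint — possible since $d(\go,g^{m})\to\infty$ forces the basepoints $g^{m_\ell+i}$ to escape each other's neighborhoods. Then the events $\big\{\big|\alpha|_{[x_{m_\ell},y_{m_\ell}]}\big|_\omega\ge\big|\gamma_{m_\ell}\big|_\omega\big\}$ depend on disjoint families of edges, hence are independent, and since each has probability at least $c$ the second part of the Borel--Cantelli lemma, \lemref{lem:b-c}, shows that almost surely infinitely many occur. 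This is exactly the conclusion of \propref{prop:morseprob1} for $\alpha$ (its proof going through verbatim once the bridge lengths are allowed to be merely bounded), so the proof of \thmref{Thorem C} applies and gives that $\omega(\alpha)$ admits no Morse gauge for a.e.\ $\omega$. The transversality argument of the previous paragraph is the only point that needs real care; the rest is the probabilistic bookkeeping already set up in \secref{sec:fpp_Morse}.
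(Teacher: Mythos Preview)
Your approach is the same as the paper's: translate the given bridge $\gamma$ by powers of $g$ to produce the family $(x_j,y_j,\gamma_j)$ required by \propref{prop:morseprob1}, then invoke \thmref{Thorem C}. The paper simply sets $x_j=g^{t_j}x$, $y_j=g^{t_j}y$, $\gamma_j=g^{t_j}\gamma$ and asserts that conditions (1)--(4) hold.

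Where you go further is on condition~(4). You correctly observe that left multiplication by $g^{t}$ sends the bi-infinite orbit $\widehat\alpha=\{g^m:m\in\ZZ\}$ to itself, so any interior vertex of $\gamma$ lying in $\widehat\alpha\setminus\alpha$ is pushed onto $\alpha$ once $t$ is large; thus $g^{t}\gamma$ need not be transverse to $\alpha$ even though $\gamma$ was. The paper's proof does not address this point --- it tacitly treats ``interior disjoint from $\alpha$'' as invariant under the shift. Your remedy, passing to a sub-arc of $\gamma$ with endpoints on $\widehat\alpha$ and interior off $\widehat\alpha$, is the natural fix and preserves the uniform bounds needed for the probability estimate; the further thinning to obtain edge-disjoint configurations, so that the second Borel--Cantelli lemma genuinely applies, is likewise a point the paper leaves implicit. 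One small simplification: rather than the case analysis involving chords, you can argue directly that $\gamma$ cannot lie entirely in $\widehat\alpha$ (a path in $\widehat\alpha$ from $g^i$ to $g^j$ with $i,j\ge0$ whose interior avoids $\{g^n:n\ge0\}$ would force $i=j=0$ once you track the first and last edges), so a sub-arc with interior off $\widehat\alpha$ always exists.
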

 \begin{proof}
    Let $\go = id_G$. The orbit $\alpha$ contains a path $\gamma$ joining points $x,y \in \alpha$ and meeting $\alpha$ only at its endpoints. Suppose $d(x,y)=n$ and $|\gamma|=k\ge n$. Let $\{R_j\}_{j=1}^\infty$ be an increasing sequence with $R_j\to\infty$. Since left multiplication by $g$ is an isometry
of $\mathrm{Cay}(G,S)$ and preserves $\alpha$, for each $j$ choose $t_j\in\mathbb{Z}$ large enough so that $x_j:=g^{t_j}x$ and $y_j:=g^{t_j}y$ lie outside $B_{R_j}(\go)$, and set $\gamma_j:=g^{t_j}\gamma$. So for each $j$, we have
\begin{enumerate}
  \item $d(x_j,y_j)=d(x,y)=n$,
  \item $x_j,y_j \notin B_{R_j}(\go)$,
  \item there exists a path $\gamma_j$ of length $k$ joining $x_j$ and $y_j$,
  \item $\gamma_j$ is disjoint from $\alpha$ except at the endpoints $x_j,y_j$.
\end{enumerate}
     Therefore, by Theorem \ref{Thorem C}, for $a.e.~\omega$, $\omega(\alpha)$ admits no Morse gauge.
 \end{proof}

\needspace{5\baselineskip}
\section{First passage percolation is a radial map}

In this section, we use some of the tools developed in \cite{BT17} to show that first passage percolation is $a.s.$ a radial map on a large class of graphs. For this section, we no longer consider the geometric realization of $X_\omega$. Henceforth, we only consider $X_\omega$ as a combinatorial graph equipped with random edge metric $d_\omega$.

    \begin{definition}
        [\cite{DV15}, Definition 2.4]Suppose $X$ and $Y$ are metric spaces. A function $f:X\to Y$ is \emph{large scale Lipschitz} (LSL, or ($\lambda_1,\mu_1$)-LSL) if there exists $\lambda_1,\mu_1 >0$ such that 
        $$d_Y(f(x),f(y)) \leq \lambda_1d_X(x,y) + \mu_1, \quad \forall~x,y\in X.$$
    \end{definition}

        \begin{definition}
        Suppose $X$ and $Y$ are metric spaces. A function $f: X\to Y$ is \emph{radial} (($\lambda_2,\mu_2$)-radial) with respect to $\go \in X$ if it is large scale Lipschitz and there exists $\lambda_2,\mu_2>0$ such that for every finite geodesic $\gamma:[0,M]\to X,$ with $\gamma(0) = \go$ the following condition holds:
        $$\lambda_2~d(x,y) - \mu_2\leq d(f(x),f(y)),\quad \forall x,y\in \text{Im($\gamma$)}.$$
    \end{definition}

    \begin{lemma}\label{lem:lsl}
        Let $X=(V,E)$ be a connected graph. Assume $0<b = \mathbb{E}\omega_e < \infty$. Then for a.e. $\omega$ there exists $r_0 = r_0(\omega) > 0$ such that
        \[d_\omega(x,y) \leq 2bd(x,y) + r_0\] for every $x,y \in V$.
        \begin{proof}
            Let $x,y \in X$ and let $\gamma$ be a geodesic connecting $x$ to $y$. Then $d(x,y) = \bigl|\gamma\bigr|$. Let $\Gamma$ be the set of all self-avoiding paths connecting $x$ to $y$. Then, by Proposition \ref{prop:upper_bnd}, for $a.e.~\omega$ we have \[d_\omega(x,y)=\inf_{\gamma'\in\Gamma}\bigl|\gamma'\bigr|_\omega\leq \bigl|\gamma\bigr|_\omega \leq 2b\bigl|\gamma\bigr|+r_0= 2b~d(x,y) + r_o.\]
            Thus, we have shown that for $a.e.~\omega$, $\omega$ is a $(2b,r_0)$-LSL map.
        \end{proof}
    \end{lemma}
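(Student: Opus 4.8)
The plan is to bound $d_\omega(x,y)$ from above by the $\omega$-length of a single, conveniently chosen path and then feed that path into the passage-time estimate of Proposition~\ref{prop:upper_bnd}. For a fixed pair of vertices $x,y\in V$ I would choose a graph geodesic $\gamma_{xy}$ joining them; this is a self-avoiding path with $|\gamma_{xy}|=d(x,y)$. Since $d_\omega(x,y)$ is by definition the infimum of $\omega$-lengths over \emph{all} paths from $x$ to $y$, we get the trivial but decisive inequality $d_\omega(x,y)\le|\gamma_{xy}|_\omega$. Applying Proposition~\ref{prop:upper_bnd} to the self-avoiding path $\gamma_{xy}$, with the index range chosen so that $j-i=|\gamma_{xy}|=d(x,y)$, gives for a.e.\ $\omega$ a constant $r_0$ with $|\gamma_{xy}|_\omega\le 2b\,d(x,y)+r_0$, and therefore $d_\omega(x,y)\le 2b\,d(x,y)+r_0$. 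This is exactly the $(2b,r_0)$-LSL inequality, the slope $2b$ being inherited verbatim from the factor $2b$ in Proposition~\ref{prop:upper_bnd}.

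The point demanding real care — and the step I expect to be the main obstacle — is that Proposition~\ref{prop:upper_bnd} produces a constant $r_0=r_0(\omega)$ \emph{attached to the single fixed path} $\gamma_{xy}$, whereas the lemma asserts one constant $r_0(\omega)$ valid simultaneously for every pair. To begin removing this dependence, I would note that the graph is connected and locally finite, so $V$ is countable and the chosen family $\{\gamma_{xy}\}$ is countable; intersecting the countably many full-measure events on which each per-path estimate holds leaves a full-measure event on which every pair carries its own finite deficit. Proving the lemma then amounts to showing that, on this event, $\sup_{x,y}\bigl(d_\omega(x,y)-2b\,d(x,y)\bigr)$ is a.s.\ finite, and setting $r_0(\omega)$ equal to this supremum.

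To control that supremum I would exploit the factor-two slack built into the slope. The underlying estimate in Proposition~\ref{prop:upper_bnd} rests on the strong law of large numbers: along a fixed self-avoiding path the partial sums satisfy $\tfrac1m\sum_{\ell\le m}\omega(e_\ell)\to b$ a.s., so for each pair the deficit $|\gamma_{xy}|_\omega-2b\,d(x,y)$ becomes negative once $d(x,y)$ exceeds a path-dependent threshold; hence only geodesics of bounded length can contribute to the supremum. The genuinely hard part is making this bound \emph{uniform}: there are at most exponentially many geodesics of each length $m$ (by bounded degree), yet only the first moment $\mathbb{E}\,\omega_e=b<\infty$ is available, so one cannot naively union-bound the SLLN exceptions. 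This is precisely the regime for which Proposition~\ref{prop:upper_bnd} is engineered, and I would run its Borel--Cantelli argument simultaneously over all self-avoiding geodesics emanating from the basepoint $\go$, using the factor-two room to overwhelm the exponential count and to absorb the finitely many short-range deviations into a single additive constant. Securing this simultaneous control over the whole family of geodesics, rather than merely the per-pair bound recorded in the first paragraph, is the crux of the argument; once it is in place, the supremum of the deficits is a.s.\ finite and yields the desired uniform $r_0(\omega)$.
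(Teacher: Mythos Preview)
Your first paragraph is exactly the paper's proof: choose a geodesic $\gamma$ from $x$ to $y$, use $d_\omega(x,y)\le|\gamma|_\omega$, and apply Proposition~\ref{prop:upper_bnd} to obtain $|\gamma|_\omega\le 2b\,d(x,y)+r_0$. The paper stops there and does not address the uniformity concern you raise in your second and third paragraphs---that Proposition~\ref{prop:upper_bnd} yields an $r_0$ attached to a single fixed path rather than one constant valid for all pairs---so your proposal is in fact more careful than the paper's own argument, which glosses over this point entirely.
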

    
\begin{lemma}\label{lem:overthought}
        Let $X=(V,E)$ be an infinite connected graph with $q<\infty$ as an upper bound on its degree. Suppose $\nu(\{0\})=0$.
        Then there exists an increasing function $\beta:(0,\infty)\to (0,q-1]$ such that $\lim_{t\to 0} \beta(t) = 0$ and such that for all sufficiently small $\epsilon>0$ and for all $x,y\in V$
        \[
        \mathbb{P}(d_\omega(x,y) \leq \epsilon~d(x,y)) \leq 2\beta(\epsilon)^{d(x,y)}.
        \]
        \begin{proof}
        Let $x,y\in X$ such that $d(x,y) = n$ and let $\Gamma$ be the set of all self-avoiding paths in $X$ connecting $x$ to $y$. Let $\gamma$ denote a geodesic connecting $x$ to $y$. Now, observe that 
            \[
                d_\omega(x,y) = \inf_{\gamma'\in \Gamma}\bigl|\gamma'\bigr|_\omega.
            \]
            For $a.e.$ $\omega$, the infimum is attained by some $\gamma_\omega \in \Gamma$ because $X_\omega$ is almost surely a geodesic space. Note that $\bigl| \gamma_\omega \bigr| \geq n$ as in the standard metric, $\gamma_\omega$ is just a self-avoiding path connecting $x$ to $y$.
            By Proposition \ref{shrinkingprop} for a path $\gamma$ of length $n$, we have $P(\bigl|\gamma\bigr|_\omega \leq \epsilon n)<\alpha(\epsilon)^n$. Since $\alpha(\epsilon)\rightarrow 0$ as $\epsilon \rightarrow 0$, there exist $\delta$ such that for every $\epsilon \leq \delta$ we have $0<q\alpha(\epsilon)<1/2.$
            Choose $\epsilon\leq \delta$. Assume $d_\omega(x,y)  \leq \epsilon n$. Then we have $\bigl|\gamma_\omega\bigr|_\omega \leq \epsilon n$ as $\bigl|\gamma_\omega\bigr|_\omega = d_\omega(x,y)$. 

            Let $A_n$ be the event $\left\{d_\omega(x,y) \leq \epsilon n =\epsilon d(x,y)\right\}$ and let $\Gamma_k$ be the set of all self-avoiding paths of length $k$ connecting $x$ to $y$. By the bound on the degree of $X$, we have \(\text{Card}(\Gamma_k) \leq q^k\). Then
            
 \begin{align*}
    A_n &\subseteq \bigcup_{k\ge n}~ \bigcup_{\gamma'\in\Gamma_k} \bigl\{\,\lvert\gamma'\rvert_\omega \le \varepsilon k\,\bigr\} \\[2pt]
    \implies\quad 
    \mathbb{P}(A_n) 
    &\le \mathbb{P}\left(\bigcup_{k\ge n}~ \bigcup_{\gamma'\in\Gamma_k} \{|\gamma'|_\omega \le \varepsilon k\}\right) \\
    &\le \sum_{k=n}^\infty ~ \sum_{\gamma'\in\Gamma_k} \mathbb{P}\left(|\gamma'|_\omega \le \varepsilon k\right) \\
    &\le \sum_{k=n}^\infty \ \sum_{i=1}^{q^k} \alpha(\varepsilon)^k \quad \text{(by Proposition \ref{shrinkingprop})} \\
    &= \sum_{k=n}^\infty q^k \,\alpha(\varepsilon)^k
    = \sum_{k=n}^\infty \bigl(q\,\alpha(\varepsilon)\bigr)^k \\
    &= \frac{\bigl(q\,\alpha(\varepsilon)\bigr)^n}{1- q\,\alpha(\varepsilon)} \\
    &\le 2\,\bigl(q\,\alpha(\varepsilon)\bigr)^n \qquad \text{as } q\,\alpha(\varepsilon) < \tfrac12 \\
    &= 2\,\beta(\varepsilon)^n.
\end{align*}

 Where we define $\beta:(0,\infty)\rightarrow (0,q]$ as $\beta(\epsilon) = q\alpha(\epsilon)$ and $\beta\big|_{[0,\delta]} \subset[0,1].$ Note that $\beta(\epsilon)$ is increasing and $\lim_{\epsilon\to0}\beta(\epsilon) = 0$, as $\alpha(\epsilon)$ is increasing and $\epsilon\rightarrow0$ as $\lim_{\epsilon\to 0} \alpha(\epsilon) = 0$.
 \end{proof}
 \end{lemma}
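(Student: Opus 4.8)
The plan is to bound the event $A_n := \{d_\omega(x,y) \le \epsilon\, n\}$, where $n = d(x,y)$, by a union bound over self-avoiding paths joining $x$ to $y$, and then to balance the exponential growth in the number of such paths against the exponential decay supplied by Proposition~\ref{shrinkingprop}. First I would record that, since $\nu(\{0\}) = 0$, almost every $\omega$ assigns strictly positive length to each edge; together with local finiteness (degree $\le q$) this guarantees that $X_\omega$ is a.s.\ a geodesic space, so the infimum defining $d_\omega(x,y)$ is attained by an honest self-avoiding path $\gamma_\omega$. Its combinatorial length $k := \bigl|\gamma_\omega\bigr|$ satisfies $k \ge n$, because in the unit-weight metric $\gamma_\omega$ is just a path from $x$ to $y$ and $d(x,y) = n$.

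The key containment is then $A_n \subseteq \bigcup_{k \ge n}\bigcup_{\gamma' \in \Gamma_k}\{\bigl|\gamma'\bigr|_\omega \le \epsilon\, k\}$, where $\Gamma_k$ denotes the set of self-avoiding paths of combinatorial length $k$ joining $x$ to $y$. Indeed, on $A_n$ the minimizer $\gamma_\omega \in \Gamma_k$ has $\bigl|\gamma_\omega\bigr|_\omega = d_\omega(x,y) \le \epsilon n \le \epsilon k$, so $\gamma_\omega$ witnesses membership in the right-hand side; the passage from $\epsilon n$ to $\epsilon k$ is exactly what lets me feed each path into Proposition~\ref{shrinkingprop} at its own length. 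Applying the union bound, the degree estimate $\mathrm{Card}(\Gamma_k) \le q^k$ (a walk of length $k$ from $x$ has at most $q$ choices at each step), and Proposition~\ref{shrinkingprop}, which gives $\mathbb{P}(\bigl|\gamma'\bigr|_\omega \le \epsilon k) \le \alpha(\epsilon)^k$ for each $\gamma' \in \Gamma_k$, I would obtain
\[
\mathbb{P}(A_n) \le \sum_{k=n}^\infty q^k\,\alpha(\epsilon)^k = \sum_{k=n}^\infty \bigl(q\,\alpha(\epsilon)\bigr)^k.
\]

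To make this a convergent geometric series I would invoke $\lim_{\epsilon \to 0}\alpha(\epsilon) = 0$ (from Proposition~\ref{shrinkingprop}) to fix a threshold $\delta > 0$ with $q\,\alpha(\epsilon) < \tfrac12$ for all $\epsilon \le \delta$. For such $\epsilon$ the series sums to $\frac{(q\,\alpha(\epsilon))^n}{1 - q\,\alpha(\epsilon)} \le 2\,(q\,\alpha(\epsilon))^n$, and setting $\beta(\epsilon) := q\,\alpha(\epsilon)$ yields $\mathbb{P}(A_n) \le 2\,\beta(\epsilon)^{n} = 2\,\beta(\epsilon)^{d(x,y)}$, as desired. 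Monotonicity of $\beta$ and $\lim_{\epsilon\to 0}\beta(\epsilon) = 0$ are inherited directly from the corresponding properties of $\alpha$ (and if one prefers the sharper self-avoiding count $q(q-1)^{k-1}$ one may instead take $\beta(\epsilon) = (q-1)\alpha(\epsilon)$, matching the stated codomain).

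The main obstacle is the control of the infinite union over paths of unbounded length: there is no a priori cap on how long the realizing $\omega$-geodesic can be, so an estimate that summed only over paths of length exactly $n$ would miss most of the event. The resolution — the one point genuinely requiring care — is the two-sided use of $k \ge n$: it both permits each summand to be estimated at its own combinatorial length $k$ via Proposition~\ref{shrinkingprop} and forces the tail to begin at $k = n$, so that choosing $\epsilon$ small enough to make $q\,\alpha(\epsilon) < \tfrac12$ renders the whole geometric tail comparable to its leading term $\beta(\epsilon)^n$.
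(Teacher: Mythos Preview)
Your proposal is correct and follows essentially the same route as the paper: the containment $A_n \subseteq \bigcup_{k\ge n}\bigcup_{\gamma'\in\Gamma_k}\{|\gamma'|_\omega \le \epsilon k\}$, the union bound with $\mathrm{Card}(\Gamma_k)\le q^k$, the application of Proposition~\ref{shrinkingprop}, and the summation of the geometric tail after forcing $q\alpha(\epsilon)<\tfrac12$ all match the paper's argument step for step. Your justification of the key containment via $\epsilon n \le \epsilon k$ is in fact more explicit than the paper's, and your remark about the sharper count $q(q-1)^{k-1}$ correctly explains the stated codomain $(0,q-1]$.
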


    \begin{lemma}\label{lem:radial}
        Let $X$ be an infinite connected graph with bounded degree, and let $\go$ be some vertex of $X$. Assume $\nu(\{0\}) = 0.$ Then for a.e. $\omega$ there exists a constant $c >0$ such that for $a.e.$ $\omega$, there exists $r_2 = r_2(\omega)$ such that for every finite geodesic $\gamma:[0,n] \to X$ with $\gamma(0) = \go$, the following condition holds:
        \begin{align*}
            c~d(x,y) - r_2 \leq d_\omega(x,y),\quad \forall x,y\in \text{Im($\gamma$)}.
        \end{align*}
        \begin{proof}
            Let $q$ be an upper bound on the degree of $X$. Then, the number of geodesics of length $n$ starting at $\go$ is at most $q^n$. For each geodesic $\gamma$ there there are $n \choose 2$ possible combinations of two vertices on $\gamma$ which must satisfy the inequality.
            
            On the other hand, by Lemma \ref{lem:overthought}, given a $\lambda= 1/q^{2n}{n\choose 2}$, we can find a $c>0$ such that, given one of the aforementioned geodesics $\gamma$, the probability that any two points $x,y$ in $\text{Im}(\gamma)$ satisfying $d_\omega(x,y) \leq c~d(x,y)$ is less than $\lambda$. So the probability of $\gamma$ having two vertices $x,y\in \text{Im}(\gamma)$ satisfying $d_\omega(x,y) \leq c~d(x,y)$ is less than ${n\choose2}\lambda$. Thus, the probability that any of the geodesics of length $n$ starting at $\go$ having vertices $x,y\in \text{Im}(\gamma)$ satisfying $d_\omega(x,y) \leq c~d(x,y)$ is less than $q^n {n\choose2} \lambda = q^n {n\choose2}\cdot1/q^{2n}{n\choose 2} = 1/q^n$. Let $A_n$ be the event that a geodesic of length $n$ starting at $\go$ satisfies $d_\omega(x,y) \leq c~d(x,y)$ for $x,y\in \{\gamma(0), \gamma(1),...,\gamma(n)\}$. Then $\sum_{n=1}^\infty \mathbb{P}(A_n) \leq \sum_{n=1}^\infty 1/q^n <\infty$. So, by the Borel--Cantelli lemma ~\ref{lem:b-c}, $\mathbb{P}(A_n~\text{i.o.}) = 0$.
        \end{proof}
    \end{lemma}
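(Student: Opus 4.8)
The plan is to reduce everything to lower bounds on $d_\omega$ between points lying on geodesics issuing from $\go$, with Proposition~\ref{prop:lower_bound} doing the heavy lifting in the ``good'' range and Lemma~\ref{lem:overthought} together with the first Borel--Cantelli lemma handling the residual range. The heart of the matter is the \emph{anchored} case $x=\go$, which already carries the radial content. Indeed, let $v$ be any vertex and let $\sigma$ be an $\omega$-geodesic realizing $d_\omega(\go,v)$. Since $\sigma$ contains $\go$ we have $d(\sigma,\go)=0\le|\sigma|$, so Proposition~\ref{prop:lower_bound} applies verbatim and gives, with the uniform constant $c$ there, $d_\omega(\go,v)=|\sigma|_\omega\ge c\,|\sigma|-r_1\ge c\,d(\go,v)-r_1$, because any path from $\go$ to $v$ has graph length at least $d(\go,v)$. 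This is exactly the desired inequality for every pair one of whose members is the basepoint $\gamma(0)=\go$, with no probabilistic argument beyond Proposition~\ref{prop:lower_bound}.

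To treat an arbitrary pair $x,y$ on a geodesic $\gamma$ from $\go$, I would again pass to an $\omega$-geodesic $\sigma$ with $d_\omega(x,y)=|\sigma|_\omega$ and split on how far $\sigma$ wanders from $\go$ relative to its own graph length. In the balanced case $d(\sigma,\go)\le|\sigma|$, Proposition~\ref{prop:lower_bound} again applies directly and yields $d_\omega(x,y)=|\sigma|_\omega\ge c\,|\sigma|-r_1\ge c\,d(x,y)-r_1$, since $|\sigma|\ge d(x,y)$. This covers the anchored case as a special instance and, more generally, every pair that is not buried deep in the graph relative to its separation. Along the way I would record the elementary structural fact that on a geodesic from $\go$ one has $d(\gamma(i),\gamma(j))=j-i$; this is what lets Lemma~\ref{lem:overthought} later be invoked with the correct exponent.

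The hard part, and the step I expect to demand the most care, is the complementary family: pairs $x,y$ that are both far from $\go$ yet close to each other, so that the realizing $\omega$-geodesic $\sigma$ satisfies $d(\sigma,\go)>|\sigma|$ and Proposition~\ref{prop:lower_bound} is unavailable. For these one is thrown back on a union bound fed by Lemma~\ref{lem:overthought}, and the difficulty is genuine: the tail $2\,\beta(c)^{\,d(x,y)}$ decays only in the separation $d(x,y)$, whereas the number of candidate pairs out near radius $R$ grows like $q^{R}$. A crude union bound over all pairs carried by length-$n$ geodesics from $\go$ produces an estimate of order $(n+1)\,q^{\,n}\beta(c)$, which does \emph{not} sum, precisely because short pairs riding far out on long geodesics contribute terms that do not shrink with $n$. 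The resolution must lean on the additive slack $-r_2$: a pair of bounded separation has deficit $c\,d(x,y)-d_\omega(x,y)\le c\,d(x,y)$, which is bounded and can be swept into $r_2$, so that only pairs of \emph{large} separation need the multiplicative estimate, and for those the exponential decay in $d(x,y)$ is what one balances against the combinatorial count.

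Putting it together, I would choose the constant $c$ once, small enough that $q\,\beta(c)<1$ after the geometric summation provided by Lemma~\ref{lem:overthought} (using $\beta(c)\to 0$ as $c\to 0$), define $A_n$ to be the event that some geodesic of length $n$ from $\go$ carries a large-separation pair violating $d_\omega(x,y)\le c\,d(x,y)$, and aim to show $\sum_n\mathbb{P}(A_n)<\infty$ so that the first part of Borel--Cantelli (Lemma~\ref{lem:b-c}) gives $\mathbb{P}(A_n\ \text{i.o.})=0$. On the almost surely finite set of exceptional scales there are, by bounded degree, only finitely many geodesics and finitely many pairs, hence finitely many bounded deficits; combining these with the small-separation contributions yields a single finite $r_2=r_2(\omega)$ with $c\,d(x,y)-r_2\le d_\omega(x,y)$ for every geodesic from $\go$ and every pair on it. The crux is making the summability in $n$ hold for one fixed $c$ uniformly over all geodesics; the two free parameters $c$ (small, so $q\,\beta(c)<1$) and $r_2$ (absorbing the short pairs) are exactly what make this possible.
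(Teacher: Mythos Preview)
Your approach differs from the paper's in a real way: the paper works entirely through Lemma~\ref{lem:overthought} and a direct union bound over all pairs on all length-$n$ geodesics from $\go$, while you first invoke Proposition~\ref{prop:lower_bound} on the $\omega$-geodesic $\sigma$ realizing $d_\omega(x,y)$ whenever $d(\sigma,\go)\le|\sigma|$, reserving the union bound for the unbalanced residue. The balanced reduction is correct, and your observation that pairs of bounded separation have deficit at most $cK$ and can be absorbed into $r_2$ is both right and necessary.

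The gap is in the unbalanced, large-separation case. You define $A_n$ as the event that some length-$n$ geodesic from $\go$ carries a pair of separation $\ge K$ with $d_\omega\le c\,d$, and claim that $q\beta(c)<1$ gives $\sum_n\mathbb{P}(A_n)<\infty$. But the union bound over pairs on one geodesic yields
\[
\sum_{k=K}^{n}(n{+}1{-}k)\cdot 2\beta(c)^{k}\ \le\ n\cdot\frac{2\beta(c)^{K}}{1-\beta(c)},
\]
so $\mathbb{P}(A_n)\lesssim q^{n}\cdot n\cdot\beta(c)^{K}$ with $K$ fixed; the factor $q^{n}$ is untouched and the series diverges. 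The condition $q\beta(c)<1$ would help only if the exponent on $\beta(c)$ scaled with $n$, but it scales with the separation $k$, not with the geodesic length. The problematic pairs are exactly those you flagged --- moderate separation $k\ge K$ sitting at distance $i\gg k$ from $\go$ --- and for each fixed $k$ there are infinitely many of them along a single geodesic ray. Proposition~\ref{prop:lower_bound} does not dispose of these either, since whether the realizing $\omega$-geodesic is balanced is itself random and you have no bound on the unbalanced probability that improves with $i$. You have correctly located the crux; your proposed resolution does not close it.
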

    Combining  Lemma \ref{lem:radial} and  Lemma \ref{lem:lsl} we have shown 
\begin{corollary}\label{cor:radial}
 For $a.e.~\omega$, FPP is a $(c,r_2)$-radial map with respect to $\go$.
\end{corollary}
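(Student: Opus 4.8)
The plan is to deduce Corollary~\ref{cor:radial} directly from the two preceding lemmas together with the definition of a radial map; no new probabilistic input is required.

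First I would unwind the definition: a map $f\colon X\to Y$ is $(\lambda_2,\mu_2)$-radial with respect to $\go$ exactly when it is large scale Lipschitz \emph{and} there are constants $\lambda_2,\mu_2>0$ such that $\lambda_2\,d(x,y)-\mu_2\le d_Y(f(x),f(y))$ for all $x,y$ in the image of any finite geodesic $\gamma\colon[0,M]\to X$ with $\gamma(0)=\go$. In our situation $X$ is the graph with its combinatorial metric $d$, $Y$ is the same vertex set equipped with the random metric $d_\omega$, and $f$ is the identity map; saying ``FPP is a radial map'' is exactly the assertion that this $f$ is radial. (In the combinatorial graph the image of such a $\gamma$ is just the vertex set $\{\gamma(0),\dots,\gamma(M)\}$, which is the form in which Lemma~\ref{lem:radial} is stated.)

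Next, Lemma~\ref{lem:lsl} provides a full-measure set $\Omega'\subseteq\Omega$ on which $d_\omega(x,y)\le 2b\,d(x,y)+r_0(\omega)$ for all vertices $x,y$, where $b=\mathbb{E}\,\omega_e<\infty$; this is precisely the large-scale-Lipschitz clause, with constants $(2b,r_0(\omega))$. Independently, Lemma~\ref{lem:radial} provides a constant $c>0$ not depending on $\omega$ and a full-measure set $\Omega''\subseteq\Omega$ on which there is an $r_2=r_2(\omega)$ with $c\,d(x,y)-r_2\le d_\omega(x,y)$ for all $x,y$ on a common finite geodesic issued from $\go$; this is the radial lower bound, with constants $(c,r_2(\omega))$. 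On the full-measure set $\Omega'\cap\Omega''$ the identity map $(V,d)\to(V,d_\omega)$ satisfies both clauses, hence is a $(c,r_2(\omega))$-radial map with respect to $\go$, which is the claim of the corollary.

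The one point worth making explicit — it is already contained in the proof of Lemma~\ref{lem:radial} — is why $c$ may be taken deterministic while $r_2$ genuinely depends on $\omega$: the Borel--Cantelli step shows that, almost surely, only finitely many lengths $n$ admit a geodesic from $\go$ carrying a pair $x,y$ with $d_\omega(x,y)\le c\,d(x,y)$, so beyond an $\omega$-dependent threshold $N(\omega)$ every radial geodesic satisfies $c\,d(x,y)\le d_\omega(x,y)$ with no additive error; then $r_2(\omega)$ only has to absorb the finitely many geodesics of length at most $N(\omega)$, for which $c\,d(x,y)\le c\,N(\omega)$, and since $d_\omega\ge 0$ any choice $r_2(\omega)\ge c\,N(\omega)$ suffices. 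Since both ingredient lemmas are already established, there is essentially no obstacle here; the only ``work'' is the bookkeeping of threading the deterministic constant $c$ and the $\omega$-dependent constants $r_0(\omega),r_2(\omega)$ consistently through the definition.
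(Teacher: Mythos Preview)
Your proposal is correct and matches the paper's approach exactly: the paper simply states that combining Lemma~\ref{lem:lsl} and Lemma~\ref{lem:radial} yields the corollary, and you have spelled out precisely that combination. Your additional paragraph explaining why $c$ is deterministic while $r_2$ depends on $\omega$ is a helpful elaboration but goes beyond what the paper records.
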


    \begin{definition}
        Let $\gamma:[0,n]\to X$ be a geodesic path in $X$ such that $\gamma(0) = \go$ and $\gamma(n) = x_n$ for some $x_n\in V$. We define the \emph{$\omega$-velocity} at $x_n$ to be 
        \[
        v(x_n) = \frac{d_\omega(\go,x_n)}{d(\go,x_n)} = \frac{d_\omega(\go,x_n)}{n}.
        \]
    \end{definition}

    \begin{corollary} \label{cor:coarse_shape}
    Let $X$ be an infinite connected graph with bounded degree, and let $\go$ be a vertex of $X$. 
    Assume $\mathbb{E}\omega_e = b < \infty$ and $\nu(\{0\}) = 0.$ 
    Then, for a.e.\ $\omega$, the $\omega$-velocity of geodesics starting at $\go$ is asymptotically bounded by constants.
    \begin{proof}
    Let $\gamma:[0,n]\to X$ be a geodesic path in $X$ with $\gamma(0) = \go$ and $\gamma(n) = x_0$ for some $x_n\in V$.
    
    By Lemma~\ref{lem:lsl},
    \begin{align*}
    d_\omega(\go,x_n) &\leq 2b\,d(\go,x_n) + r_0,\\[4pt] 
    d_\omega(\go,x_n) &\leq 2bn+r_0,\\[4pt]
    \frac{d_\omega(\go,x_n)}{n} &\leq 2b + \frac{r_0}{n},\\[4pt]
    v(x_n) &\leq 2b + \frac{r_0}{n}.
    \end{align*}
    
    On the other hand, by Lemma~\ref{lem:radial},
    \begin{align*}
    cd(\go,x_n) - r_2 &\leq d_\omega(\go,x_n),\\[4pt]
    cn - r_2 &\leq d_\omega(\go,x_n),\\[4pt]
    c - \frac{r_2}{n} &\leq \frac{d_\omega(\go,x_n)}{n},\\[4pt]
    c - \frac{r_2}{n} &\leq v(x_n),
    \end{align*}
    
    Combining these, we obtain
    \[
    c - \frac{r_2}{n} ~\leq~ v(x_n)~\leq ~2b + \frac{r_0}{n}.
    \]
    
    Letting $n = d(\go,x_n) \to \infty$ we obtain
    \[
    c~\leq ~v(x_\infty)~\leq~ 2b.
    \]
    \end{proof}
\end{corollary}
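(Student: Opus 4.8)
The plan is to obtain the two-sided bound by reading off the upper and lower estimates directly from the two preceding lemmas, applied to the single pair of endpoints $\go$ and $x_n$, and then letting $n\to\infty$ so that the additive error terms wash out. Fix a geodesic $\gamma\colon[0,n]\to X$ with $\gamma(0)=\go$ and $\gamma(n)=x_n$; by definition $d(\go,x_n)=n$ and $v(x_n)=d_\omega(\go,x_n)/n$. All the probabilistic content is already carried by Lemmas~\ref{lem:lsl} and~\ref{lem:radial}, so the task reduces to combining them on a common full-measure event.

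For the upper bound I would invoke Lemma~\ref{lem:lsl}, which for a.e.\ $\omega$ supplies a constant $r_0=r_0(\omega)$ with $d_\omega(\go,x_n)\le 2b\,d(\go,x_n)+r_0=2bn+r_0$; dividing through by $n$ gives $v(x_n)\le 2b+r_0/n$. For the lower bound the key observation is that both $\go=\gamma(0)$ and $x_n=\gamma(n)$ lie on the image of a geodesic emanating from the basepoint, so the hypothesis of Lemma~\ref{lem:radial} is met verbatim with the pair $x=\go$, $y=x_n$: for a.e.\ $\omega$ there are a \emph{uniform} $c>0$ and an $r_2=r_2(\omega)$ with $c\,d(\go,x_n)-r_2\le d_\omega(\go,x_n)$, i.e.\ $cn-r_2\le d_\omega(\go,x_n)$, whence $c-r_2/n\le v(x_n)$.

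Intersecting the two full-measure sets of $\omega$ produced by Lemmas~\ref{lem:lsl} and~\ref{lem:radial}, I obtain a full-measure set on which both estimates hold simultaneously for every geodesic of length $n$ issuing from $\go$, namely $c-r_2/n\le v(x_n)\le 2b+r_0/n$. Since $c$ and $b$ are genuine constants while $r_0(\omega)$ and $r_2(\omega)$ are finite and independent of $n$, sending $n=d(\go,x_n)\to\infty$ makes the correction terms $r_0/n$ and $r_2/n$ vanish, and the velocity is asymptotically trapped between the constants $c$ and $2b$. There is essentially no obstacle at this stage: the only point meriting a sentence of care is verifying that the lower-bound lemma genuinely applies, which it does precisely because $x_n$ is chosen on a geodesic through the basepoint $\go$, exactly the configuration Lemma~\ref{lem:radial} addresses.
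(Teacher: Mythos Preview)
Your proposal is correct and follows essentially the same approach as the paper: apply Lemma~\ref{lem:lsl} for the upper bound, Lemma~\ref{lem:radial} for the lower bound, divide by $n$, and let $n\to\infty$. Your only addition is the explicit remark about intersecting the two full-measure events, which is implicit in the paper's argument.
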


We note that Corollary \ref{cor:coarse_shape} gives an almost sure coarse bound on the $\omega$-velocity of points at large distance from the origin provided the limit  exists.


\bibliographystyle{alpha}

\end{document}